\documentclass[10pt,a4paper,reqno]{amsart}

\usepackage{amsmath}
\usepackage{amsfonts}
\usepackage{amssymb}
\usepackage{amsthm}
\usepackage{mathrsfs}
\usepackage[shortlabels]{enumitem}
\usepackage{graphicx}
\usepackage{color}
\usepackage{dsfont}
\usepackage[latin1]{inputenc}
\usepackage{MnSymbol}
\usepackage{enumitem}
\usepackage{extpfeil}
\usepackage{mathtools}
\usepackage{url}
\usepackage[margin=1.25in]{geometry}
\usepackage{fancyhdr}
\usepackage[all,cmtip]{xy}
\usepackage{hyperref}
\usepackage{tikz}
\usepackage{thm-restate}

\fancyhf{}
\fancyhead[C]{\thepage}
\pagestyle{fancy}
\headheight = 13pt

\numberwithin{equation}{section}


\newcommand{\R}{{\mathds R}}

\newcommand{\Z}{{\mathds Z}}
\newcommand{\G}{{\mathcal{G}}}

\def\ker{{\rm{ker}}}

\def\dim{{\rm{dim}}}

\def\G{\Gamma}

\theoremstyle{plain}
\newtheorem{theorem}{Theorem}[section]

\newtheorem{corollary}[theorem]{Corollary}

\newtheorem{lemma}[theorem]{Lemma}

\newtheorem*{question*}{Question}

\theoremstyle{definition}
\newtheorem{remark}[theorem]{Remark}
\newtheorem*{acknowledgements*}{Acknowledgements}

\newtheorem{definition}[theorem]{Definition}
\newtheorem*{notation*}{Notation}
\newtheorem*{convention*}{Convention}
\newtheorem{construction}[theorem]{Construction}


\title{Finitely presented kernels of homomorphisms from hyperbolic groups onto free abelian groups}
\author{Robert Kropholler}
\address{Mathematics Institute, Zeeman Building, University of Warwick, Coventry CV4 7AL, United Kingdom}
\email{robertkropholler@gmail.com}
\author{Claudio Llosa Isenrich}
\address{Faculty of Mathematics, Karlsruhe Institute of Technology, Englerstr. 2, 76131 Karlsruhe, Germany}
\email{claudio.llosa@kit.edu}

\thanks{}
\keywords{Subgroups of hyperbolic groups, Finiteness properties, BNSR invariants}
\subjclass[2020]{20F67 (Primary); 20F65; 20J05; 20F05; 57M07}

\begin{document}

\begin{abstract}
 For every $m\geq 2$ we produce an example of a non-hyperbolic finitely presented subgroup $H < G$ of a hyperbolic group $G$, which is the kernel of a surjective homomorphism $\phi: G\to \Z^m$. The examples we produce are of finiteness type $F_2$ and not $F_3$. 
\end{abstract}

\maketitle

\section{Introduction}

Finiteness properties of groups generalise the notions of being finitely generated and being finitely presented. We say that a group $G$ is of finiteness type $F_n$ for a non-negative integer $n$, if there is a classifying CW-complex $K(G,1)$ for $G$ with finitely many cells of dimension $\leq n$. Finite generation (resp. finite presentability) is equivalent to $F_1$ (resp. $F_2$). We call a group of type $F$ if it has a finite classifying space and of type $F_{\infty}$ if it is $F_n$ for all $n\geq 0$. The first examples of groups of finiteness type $F_n$ and not $F_{n+1}$ were constructed by Stallings \cite{Sta-63} for $n=2$ and by Bieri \cite{Bie-76} for all $n>2$. Since then it has been shown that groups with interesting finiteness properties can arise under many additional assumptions. For instance, groups of type $F_n$ and not $F_{n+1}$ can be simple \cite{SWZ19} or fundamental groups of smooth complex projective varieties \cite{DimPapSuc-09-II}. 

The class of hyperbolic groups has attracted much attention in geometric group theory. They were introduced by Gromov in the late 1980s \cite{Gro-87}. A group is \emph{hyperbolic} if it acts geometrically on a $\delta$-hyperbolic geodesic space, that is, a metric space all of whose geodesic triangles are $\delta$-thin. Hyperbolic groups share many strong properties. For instance, they are $F_n$ for all $n\geq 0$, have solvable word and conjugacy problem, satisfy a strong version of the Tits alternative and are characterised by the linearity of their Dehn functions. It is natural to ask which of these properties pass to all of their subgroups. 

For finiteness properties very recent work of Llosa Isenrich and Py shows that they do not pass to subgroups. For every $n\geq 0$ they produce an example of a subgroup of a hyperbolic group that is type $F_n$ and not $F_{n+1}$, answering an old question of Brady \cite{Bra-99}. These examples also provided new examples of finitely presented non-hyperbolic subgroups of hyperbolic groups that are not themselves hyperbolic. A variety of such examples had already been constructed before their work. The first fundamental result in this direction was due to Brady \cite{Bra-99}, who showed the existence of finitely presented subgroups of hyperbolic groups which are not themselves hyperbolic, by producing a subgroup of a hyperbolic group that is type $F_2$ and not $F_3$. Brady's examples were subsequently generalised by Lodha \cite{Lod-18} and by Kropholler \cite{Kro-21} who also produced subgroups of hyperbolic groups that are type $F_2$ and not $F_3$. Very recently Italiano, Martelli and Migliorini \cite{IMM-21,IMM-22} constructed the first example of a non-hyperbolic subgroup of a hyperbolic group that is type $F$, answering a well-known open problem. Building on their approach Llosa Isenrich, Martelli and Py \cite{LIMP-21} constructed the first subgroup of a hyperbolic group that is type $F_3$ and not $F_4$, before Llosa Isenrich and Py settled the problem for all $n\geq 0$ in \cite{LloPy-22}.

All of the aforementioned examples of finitely presented non-hyperbolic subgroups of hyperbolic groups have in common that they arise as kernels of homomorphisms from hyperbolic groups onto $\mathbb{Z}$. This raises the question if there are some fundamental reasons which imply that such subgroups always have to arise in this way. In particular, it is natural to ask if for every integer $m\geq 1$ there is a homomorphism of a hyperbolic group onto $\mathbb{Z}^m$ with finitely presented non-hyperbolic kernel. For non-positively curved groups a positive answer to this question can be readily deduced from a positive answer for $m=1$ by taking direct products. However, direct products of infinite hyperbolic groups are not hyperbolic, making this approach impossible for hyperbolic groups. Here we pursue a different approach to this problem based on a direct use of BNSR-invariants. 

BNSR-invariants were introduced by Bieri, Neumann, Strebel and Renz. They provide us with a tool for determining the finiteness properties of kernels of homomorphisms onto abelian groups via the connectivity properties of certain half-spaces defined by equivariant height maps to $\mathbb{R}$ on universal coverings of classifying spaces. In this work we explain how a method of Bux and Gonzalez for computing BNSR-invariants of right-angled Artin groups \cite{BuxGon-99} can be adapted to more general cube complexes. We then apply this method to certain 3-dimensional hyperbolic cube complexes to prove the following result.

\begin{theorem}\label{thm:Main}
 For every integer $m\geq 1$ there is a hyperbolic group $G$ and a surjective homomorphism $\phi: G\to \mathbb{Z}^m$ such that $\ker(\phi)$ is finitely presented, but not of type $F_{3}$.
\end{theorem}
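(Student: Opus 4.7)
The plan is to reduce the theorem, via the Bieri--Neumann--Strebel--Renz criterion that underlies the paper, to a question about BNSR-invariants of a specific hyperbolic group. Recall that for a surjection $\phi\colon G\twoheadrightarrow\Z^m$, the kernel $\ker(\phi)$ is of type $F_n$ if and only if the subsphere
\[
 S(G,\phi)=\bigl\{[\chi]\in S(G):\ker(\phi)\subseteq\ker(\chi)\bigr\}\cong S^{m-1}\subseteq S(G)
\]
is contained in $\Sigma^n(G)$. Thus, to prove Theorem~\ref{thm:Main} it suffices to exhibit a hyperbolic group $G$ and a surjection $\phi$ onto $\Z^m$ such that $S(G,\phi)\subseteq\Sigma^2(G)$ but $S(G,\phi)\not\subseteq\Sigma^3(G)$.

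For the construction of the pair $(G,\phi)$, I would start from one of the known $F_2$-but-not-$F_3$ constructions yielding a hyperbolic group mapping onto $\Z$ (e.g.\ Brady \cite{Bra-99}, Lodha \cite{Lod-18}, or Kropholler \cite{Kro-21}), realised as the fundamental group of a compact $3$-dimensional non-positively curved cube complex $X$ whose vertex links contain no embedded flat. Starting from such a seed, I would glue together or take an appropriate covering of several copies so that the resulting complex $X_m$ is again compact and non-positively curved with hyperbolic links, has first Betti number at least $m$, and carries an $m$-dimensional family of cellular height functions assembling into $\phi\colon\pi_1(X_m)\twoheadrightarrow\Z^m$ whose associated character subsphere $S(G,\phi)$ consists entirely of characters whose Morse-theoretic behaviour matches that of the seed. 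The adapted Bux--Gonzalez machinery developed earlier in the paper then converts the problem of deciding $[\chi]\in\Sigma^n(G)$ into checking that the ascending link $\lk^{\uparrow}_\chi(v)\subseteq\lk(v)$ -- the subcomplex spanned by directions along which $\chi$ is positive -- is $(n-1)$-connected at every vertex $v$ of $X_m$. Both of the containments above then become combinatorial questions about ascending links.

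The main obstacle will be establishing $S(G,\phi)\subseteq\Sigma^2(G)$: one needs $\lk^{\uparrow}_\chi(v)$ to be simply-connected simultaneously for every $[\chi]$ in the whole $(m-1)$-sphere $S(G,\phi)$, rather than for a single Morse direction as in the seed examples. This calls for a parametrised argument over the character sphere. The key observation is that the combinatorial type of $\lk^{\uparrow}_\chi(v)$ is locally constant on the open chambers of a hyperplane arrangement in $S(G,\phi)$ determined by the finitely many edge directions at $v$, so the verification reduces to finitely many cases indexed by sign patterns of $\chi$, and the gluing in the construction must be arranged so that every chamber yields a simply-connected ascending link. By contrast, the negative statement $S(G,\phi)\not\subseteq\Sigma^3(G)$ should be the easier half: it suffices to exhibit one character $[\chi_0]\in S(G,\phi)$ whose ascending link carries non-trivial $\pi_2$, which will be inherited, by construction, from the non-triviality responsible for the $F_2$-not-$F_3$ behaviour of the seed example.
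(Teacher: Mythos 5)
Your high-level frame (reduce via Theorem \ref{thm:Sigma-BNSR} to a statement about the character sphere $S(G,\ker\phi)$ and then check Morse-theoretic conditions over that sphere) is the same as the paper's, but two of your steps have genuine gaps. First, the construction: ``glue together or take an appropriate covering of several copies'' of a seed example is not a construction, and it is not what the paper does; it is unclear that such a gluing remains hyperbolic or carries $m$ independent characters whose Morse theory can be controlled. The paper instead builds the rank into the defining data: ramified covers $Y^{(n)}$ of $\Theta^{(n)}\times\Theta^{(n)}\times\Theta^{(n)}$ (Brady--Kropholler type) or complexes $X_\Gamma$ associated to sizeable graphs of rank $n$ (Lodha type), and the bulk of the proof is the explicit verification of link conditions for these families. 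Second, the criterion you propose to check -- that the ascending link of directions along which $\chi$ increases is $(n-1)$-connected at every vertex -- is the Bestvina--Brady condition, and it is not the right one here: for $m\geq 2$ the sphere $S(G,\ker\phi)$ unavoidably contains characters that vanish on some edges (in the paper's examples, exactly the characters with some $\lambda_i=0$), and for those one needs the Bux--Gonzalez-type condition of Theorem \ref{thm:Main-BNSR}, i.e.\ graded connectivity of the ascending and descending \emph{living links of dead simplices} (living link of a dead vertex connected, of a dead edge non-empty), not merely of the vertex links. Your chamber-decomposition remark reduces the check to finitely many sign patterns, but the characters on the walls of that arrangement are precisely the ones your stated criterion does not handle, and they are where the paper's case analysis (dead vertices and dead edges in the links of Type 1/Type 2 vertices, resp.\ Cases 1 and 2) takes place.

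The negative half is also not ``the easier half'' by your route: exhibiting one character $[\chi_0]$ whose ascending link has non-trivial $\pi_2$ does not show $[\chi_0]\notin\Sigma^3(G)$. The results proved in the paper (Theorem \ref{thm:Main-BNSR}, Theorem \ref{thm:Prelim-BNSR}) are only \emph{sufficient} conditions for membership in $\Sigma^{n+1}$; no converse is established in this generality, and non-trivial homotopy in an ascending link can die after the coning/limit arguments, so by itself it proves nothing about $\Sigma^3$ or about failure of $F_3$. The paper sidesteps this entirely with a homological argument: if $\ker\phi$ were of type $F_3$ it would be of type $FP$, forcing $\chi(Y)=\chi(\ker\phi)\cdot\chi(\Z^m)=0$, whereas the explicit Euler characteristic computations from \cite{Kro-21} show $\chi(Y)<0$. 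To close your argument you would need either such an Euler-characteristic (or $FP_3$-obstruction) argument or a genuine non-membership criterion for $\Sigma^3$; as written, this step fails.
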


As a direct consequence of Theorem \ref{thm:Main}, we obtain the following result.

\begin{corollary}\label{cor:Main}
 For every integer $m\geq 1$ there is a hyperbolic group $G$ and a surjective homomorphism $\phi: G\to \Z^m$ such that $\ker(\phi)$ is finitely presented and not hyperbolic.
\end{corollary}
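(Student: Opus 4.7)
The plan is that this is an immediate consequence of Theorem \ref{thm:Main} combined with the general fact, recalled in the introduction, that every hyperbolic group is of type $F_n$ for all $n\geq 0$ (since a hyperbolic group acts geometrically on a contractible Rips complex, giving a finite classifying space).

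Concretely, given $m\geq 1$, I would apply Theorem \ref{thm:Main} to obtain a hyperbolic group $G$ and a surjective homomorphism $\phi: G\to \Z^m$ such that $\ker(\phi)$ is finitely presented but not of type $F_3$. Since every hyperbolic group has type $F_\infty$, and in particular type $F_3$, the group $\ker(\phi)$ cannot be hyperbolic. Together with the finite presentability statement provided directly by the theorem, this yields the corollary.

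There is no real obstacle here: the entire content of the corollary lies in Theorem \ref{thm:Main}, and the only additional input is the standard finiteness property of hyperbolic groups. The reason one states the corollary separately is to emphasise the qualitative conclusion (non-hyperbolicity of the kernel) in addition to the more refined quantitative conclusion (failure of $F_3$) established in the main theorem.
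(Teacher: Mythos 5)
Your proposal is correct and is exactly the argument the paper intends: the corollary is stated as a direct consequence of Theorem \ref{thm:Main}, using precisely the fact (recalled in the introduction) that hyperbolic groups are of type $F_n$ for all $n$, so a finitely presented kernel that fails $F_3$ cannot be hyperbolic. Nothing is missing.
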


It would be interesting to extend Theorem \ref{thm:Main} to also produce examples of non-hyperbolic subgroups of hyperbolic groups of type $F_k$ and not $F_{k+1}$ as kernels of homomorphisms onto $\Z^m$ for all integers $k> 2$ and all integers $m\geq 2$. In \cite{LloPy-23} Llosa Isenrich and Py use methods from complex geometry to give a positive answer to this question for $m=2$ and arbitrary $k$. To our knowledge the case $m>2$ and $k>2$ remains open.

To prove Theorem \ref{thm:Main}, we show that the methods developed by Bux and Gonzalez in \cite{BuxGon-99} can be applied to two classes of examples of 3-dimensional CAT(0) cube complexes. The first class consists of generalisations of the examples constructed by Brady in \cite{Bra-99} and the second class consists of generalisations of the examples constructed by Lodha in \cite{Lod-18}. Both of these generalisations arise as special cases of the examples introduced by Kropholler in \cite{Kro-21}. Here we will carefully explain the additional conditions which we require for each of the two classes and how the methods developed by Bux and Gonzalez \cite{BuxGon-99} can be phrased in a more general context, so that they can be applied to our examples.

\subsection*{Structure} In Section \ref{sec:BNSR-invariants} we introduce the necessary background on BNSR-invariants and their relation to finiteness properties of coabelian subgroups. In Section \ref{sec:BuxGon} we explain how the results of Bux and Gonzalez from \cite{BuxGon-99} extend to the more general context of height maps on CAT(0) cube complexes. In Section \ref{sec:Brady-type-examples} we introduce a generalisation of Brady's examples \cite{Bra-99} of hyperbolic groups with subgroups of type $F_2$ and not $F_3$ due to Kropholler \cite{Kro-21} and then explain how to prove Theorem \ref{thm:Main} using these examples. In Section \ref{sec:Lodha-type-examples} we explain how to prove Theorem \ref{thm:Main} using examples which generalise Lodha's example from \cite{Lod-18}.

\section{Finiteness properties and BNSR-invariants}\label{sec:BNSR-invariants}

Introduced by Bieri, Neumann, Strebel and Renz \cite{BNS-87,BieRen-88, Ren-88} the BNSR-invariants of a group $G$ describe the finiteness properties of kernels of homomorphism onto abelian groups. They are subsets of the character sphere
\[
	S(G):= \left({\rm Hom}(G,\mathbb{R})\setminus \left\{0\right\}\right) /\sim,
\]
where $\chi_1\sim \chi_2$, for $\chi_1,~\chi_2: G \to \mathbb{R}$, if $\chi_1=\lambda \chi_2$ for some $\lambda\in \mathbb{R}_{>0}$. For a subset $M\subseteq G$ we denote by
\[
	S(G,M):= \left\{[\chi]\in S(G) \mid \chi|_M\equiv 0\right\}\subseteq S(G)
\]
the subsphere consisting of characters that vanish on $M$.

There is also a (weaker) homological version of BNSR-invariants. For an abelian unital ring $R$ we say that a group $G$ is of type $FP_n(R)$ if there is a partial projective resolution of length $n$ of the trivial $RG$-module $R$
\[
P_n\to P_{n-1}\to \cdots \to P_1\to P_0\to R\to 0
\]
by finitely generated projective $RG$-modules $P_i$, $0\leq i \leq n$.

The (homotopical and homological) BNSR-invariants of a group $G$ of type $F_n$ are descending sequences of subsets
\[
	\Sigma^n(G)\subseteq \Sigma^{n-1}(G)\subseteq \cdots \subseteq \Sigma^1(G)\subseteq \Sigma^0(G)=S(G)
\]

\[
	\Sigma^n(G,R)\subseteq \Sigma^{n-1}(G,R)\subseteq \cdots \subseteq \Sigma^1(G,R)\subseteq \Sigma^0(G,R)=S(G).
\]
Roughly speaking $[\chi]\in \Sigma^k(G)$ if the positive half-space defined by a $\chi$-equivariant height map $\widetilde{K(G,1)}\to \mathbb{R}$ on the universal cover of a $K(G,1)$ is $(k-1)$-connected. The precise definition is a bit more technical and we will omit it here, since we will not require it; it can be found in \cite{Ren-88} or \cite[Appendix B]{BieStr-92} (see also \cite[Definition 12]{LloPy-22}). The main Theorem about BNSR-invariants that we will use describes their relation to finiteness properties of kernels of homomorphisms to free abelian groups:

\begin{theorem}[{Bieri, Neumann, Strebel and Renz \cite{BNS-87,BieRen-88, Ren-88}}]\label{thm:Sigma-BNSR}
	Let $G$ be a group of type $F_n$ and let $\chi \in {\rm Hom}(G,\mathbb{R})$ be a homomorphism to a free abelian group. Then 
	\[
		\ker(\chi) \mbox { is } F_n \mbox{ (resp. } FP_n(\mathbb{Z})\mbox{)} \Longleftrightarrow S(G,\ker(\chi))\subseteq \Sigma^n(G) \mbox{ (resp. } S(G,\ker(\chi))\subseteq \Sigma^n(G,\mathbb{Z}) \mbox{)}.
	\]
\end{theorem}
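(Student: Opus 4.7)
The plan is to deduce the theorem from a Morse-theoretic characterization of the BNSR invariants. First I would reduce to the case where $\chi$ lands in $\mathbb{Z}^m$: since $\chi$ factors through a free abelian group, this amounts to choosing finitely many integer characters $\chi_1,\dots,\chi_m:G\to\mathbb{Z}$ whose common kernel equals $\ker(\chi)$. Fix a classifying space $Y=K(G,1)$ with finite $n$-skeleton, and consider the $G$-equivariant map $h:\widetilde{Y}\to\mathbb{R}^m$ obtained by lifting $(\chi_1,\dots,\chi_m)$ to the universal cover.

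Next, I would build a candidate classifying space for $\ker(\chi)$ from the preimages $h^{-1}(K)$ of compact convex sets $K\subset\mathbb{R}^m$. A Brown--Bieri type criterion gives that $\ker(\chi)$ is of type $F_n$ precisely when the filtration of $\widetilde{Y}$ by these subcomplexes is essentially $(n-1)$-connected: for every compact $K$ there is a larger compact $K'\supseteq K$ so that the inclusion $h^{-1}(K)\hookrightarrow h^{-1}(K')$ is trivial on $\pi_i$ for all $i\le n-1$. The $FP_n(\mathbb{Z})$ version follows an identical scheme with reduced integral homology replacing homotopy.

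The heart of the argument is the equivalence between this compact-preimage connectivity condition and the inclusion $S(G,\ker(\chi))\subseteq\Sigma^n(G)$. Any compact convex polytope $K\subset\mathbb{R}^m$ is cut out by finitely many affine inequalities, each of the form $\chi'\ge c$ for some linear functional on $\mathbb{R}^m$; pulled back to $G$, such a functional is precisely a character $[\chi']\in S(G,\ker(\chi))$. Thus $h^{-1}(K)$ is the intersection of finitely many $\chi'$-positive half-spaces in $\widetilde{Y}$, and conversely every $[\chi']\in S(G,\ker(\chi))$ arises this way. A Morse-theoretic argument, inductively cutting $\widetilde{Y}$ along the height functions corresponding to these characters, then shows that essential $(n-1)$-connectivity of the filtration is equivalent to essential $(n-1)$-connectivity of the $\chi'$-positive half-space for every $[\chi']\in S(G,\ker(\chi))$, which is the definition of $[\chi']\in\Sigma^n(G)$.

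The main obstacle lies in the direction $(\Leftarrow)$, where one must combine an $(m-1)$-sphere's worth of half-space conditions in order to control a given compact $K$. This requires both the compactness of $S(G,\ker(\chi))$ and the openness of $\Sigma^n(G)$ in $S(G)$, the latter being a nontrivial theorem of BNSR in its own right. Together they yield a finite subcover of relevant characters for each $K$, which allows the Morse-theoretic reduction to go through. The rank-one special case $m=1$ is the original Bieri--Neumann--Strebel--Renz theorem and already contains the core Morse-theoretic content; the higher-rank case layers the sphere-compactness argument on top.
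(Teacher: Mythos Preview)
The paper does not give its own proof of this statement: Theorem~\ref{thm:Sigma-BNSR} is quoted as a result of Bieri, Neumann, Strebel and Renz with references to \cite{BNS-87,BieRen-88,Ren-88}, and the paper explicitly declines even to state the precise definition of $\Sigma^n(G)$. So there is nothing in the paper to compare your sketch against.

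That said, your outline is a fair summary of the standard proof in the literature. The ingredients you name --- Brown's filtration criterion for finiteness properties applied to preimages $h^{-1}(K)$ of compact convex sets, the identification of characters in $S(G,\ker(\chi))$ with linear functionals cutting out these polytopes, and the use of openness of $\Sigma^n(G)$ together with compactness of $S(G,\ker(\chi))$ to pass from a sphere of conditions to finitely many --- are exactly the right ones, and they match the arguments in Renz's thesis and in \cite{BieRen-88}. Two places where your sketch is thin: the direction $(\Rightarrow)$ is essentially not addressed (the usual route is that a finite-type $K(\ker(\chi),1)$ lets you realise $K(G,1)$ as a bundle over a torus, making the half-space analysis trivial), and the phrase ``inductively cutting $\widetilde{Y}$ along the height functions'' hides the genuinely delicate step of controlling the connectivity of an intersection of half-spaces from the connectivity of each one separately. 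But as a roadmap your proposal is sound; it is a sketch of the original proof rather than an alternative to it.
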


\section{BNSR-invariants of cubulable groups}\label{sec:BuxGon}

Let $X$ be a finite cube complex, let $G=\pi_1(X)$ be its fundamental group, and let $\pi : \widetilde{X}\to X$ be its universal covering. We assume that every $n$-cube is identified with the unit cube $\left[0,1\right]^n$. This equips $\widetilde{X}$ and $X$ with natural metrics, whose restrictions to cubes are Euclidean. The link $L_{v,\widetilde{X}}$ of a vertex $v\in \widetilde{X}$ (respectively $X$) is the intersection of $\widetilde{X}$ (resp. $X$) with a sphere of radius $\frac{1}{4}$. It carries a natural cellular structure induced by the cell structure on $\widetilde{X}$ (resp. $X$). Assume that $\widetilde{X}$ is CAT(0) (and thus that $X$ is a $K(G,1)$). By Gromov's Link condition this is equivalent to the assumption that the link of every vertex in $\widetilde{X}$ (or equivalently $X$) is a flag complex \cite[Chapter II.5]{BriHae-99}.

We call a continuous map $h: \widetilde{X}\to \mathbb{R}$ an {\em (affine) height map} for a character $\chi: G\to \mathbb{R}$ if its restriction to every cube is affine linear and $h$ is $G$-invariant with respect to $\chi$ and the natural $G$-action on $\widetilde{X}$.

We will denote edges in $\widetilde{X}$ by $\left[v_0,v_1\right]$, where $v_0$ and $v_1$ are its endpoints (they are distinct, since $\widetilde{X}$ is a CAT(0) cube complex). An affine height map $h$ equips edges in $\widetilde{X}$ with weights $\mu(\left[v_0,v_1\right]):= \left| h(v_0)-h(v_1)\right|$. Moreover, edges of non-zero weight can be equipped with a canonical orientation defined by the direction in which $h$ is increasing. 

By $G$-equivariance of $h$ the weight and orientiation of the edges in $\widetilde{X}$ are $G$-invariant and thus define weights and orientations on the edges of the finite cube complex $X$. In particular, 
\[
C_{min}:= \inf\left\{ |\mu(\left[v_0,v_1\right])|\mid \left[v_0,v_1\right] \mbox{ edge of non-zero weight in $\widetilde{X}$} \right\}>0
\] 
is attained in some edge.

In analogy to \cite{BuxGon-99} we will now use the orientation on edges to define the dead and living subcomplex of the links of vertices and then subsequently define ascending and descending links of dead subcomplexes of $\widetilde{X}$. To define them we will require the notion of a link in a simplicial complex. While related, this should not be confused with the link of a vertex in a cube complex, respectively the more general notion of the link of a subcomplex of a cube complex which we shall introduce below. We will be careful to choose our notation so that it is always clear which one of the three we are referring to.

For a simplicial complex $L$ we define the \emph{link of a simplex} $\sigma\in L$ as 
\[
Lk_L(\sigma):= \left\{\tau\in L\mid \tau \cap \sigma = \emptyset,~ \tau \cup \sigma \in L\right\}.
\] 

Let $L_{v,X}$ be the link of a vertex $v\in X$. The weights on edges of $X$ allow us to equip the vertices of $L_{v,X}$ with weights. We define the following subcomplexes of $L_{v,X}$:
\begin{itemize}
\item the \emph{dead link} $L_{v,X}^{\dagger}$ is the flag subcomplex spanned by all vertices of weight zero;
\item the \emph{ascending living link} $L_{v,X}^{\ast,\uparrow}$ is the flag subcomplex spanned by all vertices of non-zero weight corresponding to outgoing edges in $v$; and the \emph{descending living link} $L_{v,X}^{\ast,\downarrow}$ is the flag subcomplex spanned by all vertices of non-zero weight corresponding to incoming edges at $v$.
\end{itemize}
We call the simplices of the dead (resp. living) links {\em dead (resp. living) simplices}.

The main result which we shall require to construct our examples is the following.

\begin{theorem}\label{thm:Main-BNSR}
Assume that for every  vertex $v\in X^{(0)}$ and every (possibly empty) dead simplex $\sigma\in L_{v,X}^{\dagger}$ all ascending and descending living links $Lk_{L_{v,X}^{\ast,\uparrow/ \downarrow}}(\sigma):= L_{v,X}^{\ast,\uparrow /\downarrow}\cap Lk_{L_{v,X}}(\sigma)$ are $(n-\dim(\sigma)-1)$-acyclic, respectively, $L_{v,X}^{\ast,\uparrow /\downarrow}$ is, additionally, $n$-connected.

Then $\chi\in \Sigma^{n+1}(G,\mathbb{Z})$, respectively $\chi\in \Sigma^{n+1}(G)$.
\end{theorem}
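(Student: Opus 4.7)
The plan is to run a Bestvina--Brady style discrete Morse argument on the universal cover $\widetilde{X}$, using the affine height map $h$ associated to $\chi$. I would begin by invoking the standard characterization of BNSR invariants due to Bieri--Renz: to establish $\chi \in \Sigma^{n+1}(G)$ (respectively $\Sigma^{n+1}(G,\Z)$), it suffices to produce a constant $d > 0$ such that for every $t \in \R$ the inclusion of superlevel sets $\widetilde{X}^{h \geq t+d} \hookrightarrow \widetilde{X}^{h \geq t}$ is $n$-connected (respectively induces an isomorphism on $\widetilde{H}_i$ for $i < n$ and a surjection on $\widetilde{H}_n$). Since $h$ is affine on each cube, these superlevel sets inherit natural CW structures, and $G$-cocompactness together with the $G$-equivariance of $h$ permit $d$ to be chosen uniformly across $t$.

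The principal obstacle relative to classical Bestvina--Brady Morse theory is that dead edges cause entire dead subcomplexes through a vertex $v$ to lie at the single height $h(v)$, so the ``critical locus'' at a level $t$ is generally a higher-dimensional dead subcomplex rather than a set of isolated vertices. Following Bux--Gonzalez, I would organize the combinatorics around dead simplices: in a neighborhood of $v$, the passage of the filtration across the critical level $t = h(v)$ decomposes, via a Mayer--Vietoris argument over the maximal dead cubes at $v$, into contributions attached along the joins $\sigma * Lk_{L_{v,X}^{\ast,\downarrow}}(\sigma)$ and $\sigma * Lk_{L_{v,X}^{\ast,\uparrow}}(\sigma)$, indexed over dead simplices $\sigma \in L_{v,X}^{\dagger}$. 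The ascending contributions govern the entry of $v$ from below as $t$ decreases, while the descending contributions govern the symmetric step; since a path crossing $t = h(v)$ can enter the dead subcomplex from above and exit from below, both sides of the hypothesis enter the estimate.

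With this local model in place, the proof reduces to a combinatorial verification: using the join formula for (co)homology together with $\dim(\sigma * L) = \dim\sigma + 1 + \dim L$, the hypothesis that each $Lk_{L_{v,X}^{\ast,\uparrow/\downarrow}}(\sigma)$ is $(n - \dim\sigma - 1)$-acyclic translates precisely into $n$-acyclicity of each join $\sigma * Lk$; summing these contributions across a fundamental domain gives $n$-acyclicity of each filtration step and hence $\chi \in \Sigma^{n+1}(G,\Z)$. The additional hypothesis of $n$-connectivity of $L_{v,X}^{\ast,\uparrow/\downarrow}$ itself is the $\sigma = \emptyset$ case upgraded from acyclic to connected; combined with the simple connectivity of the attaching locus so obtained, the relative Hurewicz theorem lifts the homological statement to $n$-connectivity of each filtration inclusion, yielding $\chi \in \Sigma^{n+1}(G)$. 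The main technical obstacle is carrying out the Morse bookkeeping of the middle paragraph rigorously: namely, proving a relative version of the Bestvina--Brady Morse lemma valid for CAT(0) cube complexes with degenerate affine height functions, so that the local attaching data at each vertex in the presence of an arbitrary dead subcomplex is correctly identified, and verifying that the global filtration indeed decomposes into the stated contributions indexed by dead simplices.
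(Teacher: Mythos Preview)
Your overall strategy---Bestvina--Brady style Morse theory adapted to handle degenerate (dead) directions via a Mayer--Vietoris bookkeeping over dead simplices, following Bux--Gonzalez---is the right one and matches the paper's approach in spirit. However, the paper organises the argument somewhat differently, and one point in your sketch is confused.

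The paper does not attach cells along joins $\sigma * Lk_{L_{v,X}^{\ast,\uparrow}}(\sigma)$ as you describe. Instead, it introduces the \emph{level complex} $\widetilde{X}_v^{\dagger}$: the connected component through $v$ of the union of all cubes contained in the level set $h^{-1}(h(v))$. A key lemma (proved via a local link criterion for convexity of subcomplexes of CAT(0) cube complexes) shows each $\widetilde{X}_v^{\dagger}$ is convex, hence contractible. The Morse lemma is then stated at the level of these complexes: passing from $h^{-1}([b,c])$ to $h^{-1}([a,c])$ amounts to coning off the \emph{ascending link of the level complex} ${\rm Lk}^{\uparrow}_{\widetilde{X}}(\widetilde{X}_v^{\dagger})$, viewed as a bundle over $\widetilde{X}_v^{\dagger}$ via the closest-point projection. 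The hypothesis on $Lk_{L_{v,X}^{\ast,\uparrow}}(\sigma)$ is used to show that the fibre of this bundle over each cube of $\widetilde{X}_v^{\dagger}$ is $n$-acyclic (respectively $n$-connected), by an inductive Mayer--Vietoris reconstruction of the fibre over a closed cube from fibres over midpoints of its faces; this then gives $n$-acyclicity (connectivity) of the total space. Your join picture is morally related to this fibre analysis but is not how the paper packages it.

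One genuine confusion: you claim that ``both sides of the hypothesis enter the estimate'' for $\chi$ because paths can enter the dead subcomplex from above and exit below. In the paper's argument (and indeed in the standard Bieri--Renz criterion applied to superlevel sets), only the ascending data is needed to conclude $\chi \in \Sigma^{n+1}$; the descending hypothesis gives $-\chi \in \Sigma^{n+1}$. The theorem as stated assumes both simply because the applications require $S(G,\ker\chi) \subseteq \Sigma^{n+1}$, hence both $\pm\chi$.
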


Each simplex in $L_{v,X}$ corresponds to a cube $c$ adjacent to $v$. 
For each dead simplex $\sigma$ the corresponding living link $Lk_{L_{v,X}^{\ast,\uparrow/ \downarrow}}(\sigma)$ depends only on the cube corresponding to $\sigma$. 
That is, if a cube $c$ is adjacent to vertices $v, w$ and corresponds to dead simplices $\sigma, \sigma'$ in $L_{v,X}, L_{w,X}$ respectively, then $Lk_{L_{v,X}^{\ast,\uparrow/ \downarrow}}(\sigma) \cong Lk_{L_{w,X}^{\ast,\uparrow/ \downarrow}}(\sigma')$.

Theorem \ref{thm:Main-BNSR} generalises one implication of \cite[Theorem A]{BuxGon-99} by Bux and Gonzalez and the proof follows the same line of argument. We summarise the main steps below for the readers convenience and provide explanations where our situation differs from the one considered in \cite{BuxGon-99}. We closely follow the notation and structure of \cite{BuxGon-99} and refer to this work for further details and illustrations.

Let $X$ be a proper geodesic CAT(0) space and let $Y\subseteq X$ be a closed convex subspace. Let $o_Y^X : X\to Y$ be the closest point (orthogonal) projection onto $Y$; our assumptions guarantee that it is well-defined \cite[Prop. II.2.4]{BriHae-99}.

We define the link of $Y$ in $X$ as the bundle of directions with respect to the orthogonal projection $o_Y^X$. It can be thought of as the unit normal bundle of $Y$ in $X$. To give a precise definition we first define an equivalence relation on geodesics (parametrised by arc length) starting in $Y$, where $\gamma_1\sim \gamma_2$ for $\gamma_i:\left[0,r_i\right]\to X$ if there is an $s>0$ such that $\gamma_1|_{\left[0,s\right]}=\gamma_2|_{\left[0,s\right]}$. We denote the equivalence class of a geodesic $\gamma$ by $\left[\gamma\right]$. The \emph{link of $Y$ in $X$} is the space
\[
	{\rm Lk}_X(Y):= \left\{\left[\gamma\right]\mid \gamma\colon \left[0,r\right]\to X~ \mbox{geodesic with}~ o^{X}_Y(\gamma)=\gamma(0)\in Y\right\}
\]
together with the bundle projection $\pi_Y^X : {\rm Lk}_X(Y)\to Y$, $\left[\gamma\right]\mapsto \gamma(0)$. 

We record the following observation, which is analogous to \cite[Observation 3]{BuxGon-99}.

\begin{remark}\label{rmk:obs3-from-bux-gon}
If $X$ is a CAT(0) cube complex and $Y$ is a convex subcomplex, the cell structure on $X$ induces a cell structure on ${\rm Lk}_X(Y)$, where $[\gamma_1]$ and $[\gamma_2]$ lie in the same cell if there is some $r>0$ such that the images of the restrictions $\gamma_1|_{(0,r)}$ and $\gamma_2|_{(0,r)}$ lie in the interior of the same cell of $X$. If $X$ is equipped with a height function $h:X\to \mathbb{R}$, this cell structure allows us to define the ascending (resp. descending) link ${\rm Lk}_X^{\uparrow}(Y)$ (resp. ${\rm Lk}_X^{\downarrow}(Y)$) as the subcomplex consisting of cells such that for all of their elements $[\gamma]$ there is an $r>0$ such that $(h\circ \gamma)|_{(0,r)}$ is increasing (resp. decreasing). Finally, note that the fibres of $\pi_Y^X$ over points in the same open cube of $Y$ are canonically isomorphic by parallelism of geodesics, equipping the bundles ${\rm Lk}_X(Y)$ and ${\rm Lk}_X^{\uparrow/\downarrow}(Y)$ with a natural product structure over open cubes.
\end{remark}

A key step in the proof of Theorem \ref{thm:Main-BNSR} is that for real numbers $a<b<c$ the preimage $h^{-1}(\left[a,c\right])$ is homotopy equivalent to $h^{-1}(\left[a,b\right])$ with certain ascending links coned off. To make this statement precise we first need to define these links. Given a vertex $v\in \widetilde{X}^{(0)}$ consider the union of all cubes contained in the level set at height $h(v)$ -- it defines a subcomplex of $X$. We denote its connected component containing $v$ by $\widetilde{X}_v^{\dagger}$ and, following \cite{BuxGon-99}, we call the $\widetilde{X}_v^{\dagger}$ \emph{level complexes}.

We will require the following analogue of \cite[Lemma 2]{BuxGon-99}. 
\begin{lemma}\label{lem:convex}
	For every vertex $v\in \widetilde{X}^{(0)}$ the complex $\widetilde{X}_v^{\dagger}$ is a convex subspace of $\widetilde{X}$. In particular, all level complexes of $\widetilde{X}$ are contractible.
\end{lemma}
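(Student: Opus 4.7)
The plan is to show that $\widetilde{X}_v^{\dagger}$ is a connected subcomplex whose link at every vertex is a full subcomplex of the ambient vertex link, and then invoke the standard criterion that such subcomplexes of CAT(0) cube complexes are geodesically convex. Contractibility will then be automatic, since convex subsets of CAT(0) spaces are themselves CAT(0).

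First I would unpack the definition. Because $h$ is affine on each cube $c$, the restriction $h|_c$ is constant iff $h$ takes the same value on all vertices of $c$, which in turn is equivalent to every edge of $c$ having weight zero under $\mu$. Hence a cube $c$ lies in $\widetilde{X}_v^{\dagger}$ iff $h|_c \equiv h(v)$ and $c$ lies in the same component of the union of such cubes as $v$; in particular $\widetilde{X}_v^{\dagger}$ is a genuine connected subcomplex of $\widetilde{X}$.

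Next I would identify the link at an arbitrary vertex $w\in \widetilde{X}_v^{\dagger}$. A cube $c$ adjacent to $w$ lies in $\widetilde{X}_v^{\dagger}$ iff every edge of $c$ at $w$ has weight zero, i.e.\ iff the simplex of $L_{w,\widetilde{X}}$ corresponding to $c$ has all its vertices of weight zero. In other words, the link of $w$ in $\widetilde{X}_v^{\dagger}$ coincides with the dead link $L_{w,\widetilde{X}}^{\dagger}$. Since $\widetilde{X}$ is CAT(0), the vertex links $L_{w,\widetilde{X}}$ are flag complexes by Gromov's Link condition, so the flag subcomplex $L_{w,\widetilde{X}}^{\dagger}$ spanned by the dead vertices is actually a full subcomplex of $L_{w,\widetilde{X}}$.

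Finally I would invoke the well-known convexity criterion for CAT(0) cube complexes (see e.g.\ \cite[Chapter II.5]{BriHae-99}): a connected subcomplex of a CAT(0) cube complex whose link at every vertex is a full subcomplex of the ambient link is geodesically convex. Applied to $\widetilde{X}_v^{\dagger}$, this gives convexity. Contractibility of $\widetilde{X}_v^{\dagger}$ then follows: as a convex subset of the CAT(0) space $\widetilde{X}$ it is itself CAT(0), and in particular it deformation retracts to $v$ along the geodesics from $v$. The main point to verify carefully is the identification of links in the second step, because this is exactly what reduces the convexity problem to the flatness of the dead link inside the ambient (flag) link.
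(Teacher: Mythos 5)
Your argument is correct and is essentially the paper's own proof: affineness of $h$ shows that $\widetilde{X}_v^{\dagger}$ meets every cube in a face, so the link of each of its vertices is the dead link and hence a full subcomplex of the ambient link, and convexity then follows from the full-link criterion (the paper's Lemma \ref{lem:BB-convex}, quoted from Bestvina--Brady), with contractibility coming from convexity in a CAT(0) space. The only differences are cosmetic: you cite the criterion via Bridson--Haefliger rather than Bestvina--Brady (and sensibly make the connectedness hypothesis explicit), and your appeal to flagness is not really needed once affineness gives that every all-dead simplex of $L_{w,\widetilde{X}}$ actually lies in the link of $w$ in $\widetilde{X}_v^{\dagger}$.
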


Since in contrast to \cite{BuxGon-99} we work with CAT(0) cube complexes that are not necessarily universal covers of Salvetti complexes associated to right-angled Artin groups, the proof of Lemma \ref{lem:convex} requires a more general argument than the one used there. To explain the proof, we recall that a subcomplex $M\subseteq N$ of a simplicial complex $N$ is called \emph{full} if whenever a set of vertices of $M$ spans a simplex $\tau\subset N$, then $\tau \subset M$. The main ingredient in the proof of Lemma \ref{lem:convex} is the following result characterising convex subcomplexes of CAT(0) cube complexes.

\begin{lemma}\label{lem:BB-convex}
	Let $X$ be a CAT(0) cube complex and let $Y\subseteq X$ be a cubical subcomplex. Then $Y$ is convex in $X$ if and only if for every vertex $w\in Y$ the link $L_{w,Y}\subseteq L_{w,X}$ is a full subcomplex.
\end{lemma}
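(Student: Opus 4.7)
I would prove the two implications separately. For the forward direction, assume $Y$ is convex, fix $w\in Y$, and let $\tau$ be a simplex of $L_{w,X}$ whose vertices all lie in $L_{w,Y}$. These vertices correspond to edges $e_1,\ldots,e_k$ of $Y$ at $w$, and $\tau$ itself corresponds to a $k$-cube $c$ of $X$ with $w$ as a corner spanned by $e_1,\ldots,e_k$; it suffices to show $c\subseteq Y$. I would argue by induction on $k$. The case $k=1$ is immediate. For $k\geq 2$, applying the inductive hypothesis to each of the $k$ subsets of $\{e_1,\ldots,e_k\}$ of size $k-1$ gives that every codimension-$1$ face of $c$ containing $w$ lies in $Y$. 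Identifying $c$ with $[0,1]^k$ so that $w$ is the origin and $e_i$ is the $i$-th coordinate edge, the vertices $v=(0,\ldots,0,1)$ and $v'=(1,\ldots,1,0)$ both lie in $Y$, and since they differ in every cubical coordinate, the Euclidean segment from $v$ to $v'$ passes through the barycentre $(1/2,\ldots,1/2)$ of $c$. Because cubes are convex in a CAT(0) cube complex, this segment is the CAT(0) geodesic in $X$, and convexity of $Y$ places it inside $Y$. The barycentre of $c$ then lies in $Y\cap \mathrm{int}(c)$, forcing $c\subseteq Y$ since $Y$ is a subcomplex.

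For the reverse direction, assume $L_{w,Y}$ is full in $L_{w,X}$ for every $w\in Y$. The first step is to show $Y$ is locally convex in $X$ at every point. At a vertex $w$, fullness of $L_{w,Y}$ in the flag complex $L_{w,X}$ makes $L_{w,Y}$ itself flag, and the inclusion of a small open star at $w$ in $Y$ into the open star in $X$ becomes the inclusion of cones on a full flag subcomplex into the ambient link; a direct computation with the all-right spherical metric on these links yields local convexity of $Y$ in $X$ near $w$. At non-vertex points local convexity is immediate from the cubical product structure. In particular, the inclusion $Y\hookrightarrow X$ is a local isometry between locally $\mathrm{CAT}(0)$ spaces. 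I would then work componentwise: since $X$ is simply connected, the local isometry induces an injection $\pi_1(Y_0)\hookrightarrow \pi_1(X)=1$ for every connected component $Y_0$ of $Y$, so each such $Y_0$ is simply connected; combined with local convexity, a standard Cartan--Hadamard argument shows that each component of $Y$ is convex in $X$, i.e.\ that the CAT(0) geodesic between any two points of a single component of $Y$ lies in $Y$.

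The main obstacle is the reverse direction, where two standard but delicate ingredients are needed: first, the translation of the combinatorial fullness of links into metric local convexity, which goes through Gromov's link condition and a short computation with the all-right spherical metric on links; and second, the upgrade from local to global convexity via $\pi_1$-injectivity of local isometries of locally CAT(0) spaces. The forward direction, by contrast, reduces cleanly to an induction exploiting that cubes are convex in CAT(0) cube complexes and that a cubical subcomplex contains any cube whose interior it meets.
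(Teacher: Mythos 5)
Your outline is correct and in fact follows the same standard route that the paper does not write out but delegates to Steps 1--3 of the proof of \cite[Lemma 8.3]{BesBra-97}: fullness of links gives local convexity, which is then promoted to global convexity. Your forward direction is complete and nicely elementary. One small caution there: justifying that the diagonal segment is the $X$-geodesic by ``cubes are convex'' has a whiff of circularity, since convexity of a single cube is itself an instance of the lemma; it is cleaner to note that the open segment from $(0,\dots,0,1)$ to $(1,\dots,1,0)$ lies in the open cube, hence is a local geodesic of $X$ and therefore the unique CAT(0) geodesic, which by convexity of $Y$ forces the barycentre, and hence the whole cube, into the subcomplex.

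In the reverse direction the two steps you defer are precisely the mathematical content of the cited result, so as written they are asserted rather than proved: (i) that a full subcomplex of a flag complex with the all-right spherical metric is $\pi$-convex, which is what turns fullness of $L_{w,Y}\subseteq L_{w,X}$ into local convexity at $w$; and (ii) the local-to-global step. For (ii) you do not need Cartan--Hadamard or $\pi_1$-injectivity: each component of $Y$ is complete and locally compact, so Hopf--Rinow provides a geodesic of the induced length metric between any two of its points; local convexity makes this path a local geodesic of $X$, local geodesics in a CAT(0) space are global geodesics \cite[Chapter II.1]{BriHae-99}, and uniqueness of geodesics then places the $X$-geodesic inside $Y$. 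Your route via local isometries into nonpositively curved spaces also works, but it imports more machinery than needed. Two further points. Your claim that local convexity at non-vertex points is ``immediate from the cubical product structure'' is too quick: at a point of an open cube $c\subseteq Y$ one needs ${\rm Lk}(c,Y)$ to be full in ${\rm Lk}(c,X)$; this does follow from fullness of the vertex links, since ${\rm Lk}(c,Y)$ is the link of the simplex corresponding to $c$ inside $L_{w,Y}$ for a vertex $w$ of $c$ and $L_{w,X}$ is flag, but it deserves a line of proof. Finally, the ``if'' direction as stated forces $Y$ to be connected, and your componentwise conclusion is the honest formulation; this is harmless for the paper, where the level complexes to which the lemma is applied are connected by construction.
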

\begin{proof}
 This equivalence is proved by Steps 1 to 3 of the proof of \cite[Lemma 8.3]{BesBra-97}.
\end{proof}

\begin{proof}[Proof of Lemma \ref{lem:convex}]
Since the restriction of $h$ to every cube is affine linear, the subcomplex $\widetilde{X}_v^{\dagger}\subseteq \widetilde{X}$ intersects every cube of $\widetilde{X}$ in a (possibly empty) face. This implies that the link $L_{w,\widetilde{X}_{v}^{\dagger}}$ of every vertex $w$ of $\widetilde{X}_{v}^{\dagger}$ is a full subcomplex of the link $L_{w,\widetilde{X}}$ of $\widetilde{X}$ at $w$. By Lemma \ref{lem:BB-convex} this is equivalent to the convexity of $\widetilde{X}_{v}^{\dagger}\subseteq \widetilde{X}$. This completes the proof.
\end{proof}

\begin{lemma}[{Morse Lemma \cite[Lemma 7]{BuxGon-99}}]\label{lem:Morse}
 Let $a<b<c$ be real numbers such that $b-a< C_{min}$. For each level complex $\widetilde{X}_v^{\dagger}$ with $h(\widetilde{X}_v^{\dagger})\in \left[a,b\right)$, let $M_v$ be the mapping cylinder of the bundle projection $Lk_{\widetilde{X}}^{\uparrow}(\widetilde{X}_v^{\dagger})\to \widetilde{X}_v^{\dagger}$, where we view $Lk_{\widetilde{X}}^{\uparrow}(\widetilde{X}_v^{\dagger})$ as a subspace of the level set at height $b$. Let $\widetilde{M}$ be the union of all the $M_v$ obtained in this way. Then $\widetilde{M}\cup h^{-1}(\left[b,c\right])$ is a deformation retract of $h^{-1}(\left[a,c\right])$.
 
 In particular, since all level complexes are contractible, the preimage $h^{-1}(\left[a,c\right])$ is homotopy equivalent to $h^{-1}(\left[b,c\right])$ with the ascending links of level complexes $M_v$ as above coned off.
\end{lemma}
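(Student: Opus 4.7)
The plan is to follow the strategy of \cite[Lemma 7]{BuxGon-99} but to replace the explicit coordinate computations on Salvetti complexes with the intrinsic structural results now available, namely the convexity of level complexes (Lemma \ref{lem:convex}) and the product structure of link bundles over open cubes (Remark \ref{rmk:obs3-from-bux-gon}). The construction we are after is an explicit deformation retraction $r_t : h^{-1}([a,c]) \to h^{-1}([a,c])$ which is the identity on $\widetilde{M}\cup h^{-1}([b,c])$ and pushes the rest of $h^{-1}([a,b))$ into $\widetilde{M}$.

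First, I would set up the local geometry in the slab. Because $h$ is affine on every cube and every non-horizontal edge has weight at least $C_{min}$, each cube $c$ of $\widetilde{X}$ has a canonical orthogonal decomposition $c \cong F_c^{bot}\times [0,1]^{J_c}$, where $F_c^{bot}$ is the horizontal face of minimal $h$-value and $J_c$ indexes the ascending edge directions; on this decomposition $h$ depends only on the $J_c$-coordinates, affinely, with coefficients $\geq C_{min}$. The hypothesis $b-a < C_{min}$ forces the $J_c$-coordinates of any point of $c\cap h^{-1}([a,b])$ to lie in $[0,s_j)$ with $s_j<1$, so no cube touches its top face inside the slab and the orthogonal projection of $c\cap h^{-1}([a,b])$ onto $F_c^{bot}$ lies inside $F_c^{bot}$.

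Next, for each level complex $\widetilde{X}_v^{\dagger}$ with $h(v)\in [a,b)$, I would define the tubular neighborhood $N_v\subseteq h^{-1}([h(v),b])$ consisting of points $x$ such that the closest-point projection $o^{\widetilde{X}}_{\widetilde{X}_v^{\dagger}}(x)$ is defined, lies in $\widetilde{X}_v^{\dagger}$, and such that the initial direction of the geodesic from $o^{\widetilde{X}}_{\widetilde{X}_v^{\dagger}}(x)$ to $x$ is ascending. Using the cube decomposition above and Remark \ref{rmk:obs3-from-bux-gon}, $N_v$ is naturally a bundle over $\widetilde{X}_v^{\dagger}$ whose open-cube restrictions are products with fiber the portion of the ascending link sitting over that cube, truncated at height $b$. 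Radial parametrization along geodesics to height $b$ then gives a homeomorphism $N_v\cong M_v$ identifying $\widetilde{X}_v^{\dagger}$ with the mapping-cylinder apex and $\mathrm{Lk}^{\uparrow}_{\widetilde{X}}(\widetilde{X}_v^{\dagger})$ with the mapping-cylinder base at height $b$. Because the level complexes are convex (Lemma \ref{lem:convex}), distinct level complexes at the same height give disjoint $N_v$'s except on their common ascending-link image in $h^{-1}(\{b\})$, so the $N_v$'s patch together into a closed subspace which is precisely $\widetilde{M}$.

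Given this structure, I would define $r_t$ by: keeping $r_t$ the identity on $h^{-1}([b,c])$; and, on each cube $c$ with $c\cap h^{-1}([a,b))\neq\emptyset$, linearly interpolating the $F_c^{bot}$-coordinates to those of $o^{\widetilde{X}}_{\widetilde{X}_v^{\dagger}}(x)$ while keeping the ascending coordinates fixed, where $v$ is the bottom vertex of $c$ (or, more precisely, the vertex of $F_c^{bot}$ whose level complex contains $F_c^{bot}$). By construction $r_1$ lands in $\widetilde{M}\cup h^{-1}([b,c])$, the map fixes $\widetilde{M}$, and affinity on cubes together with the compatibility of orthogonal projections on common faces gives continuity across cube boundaries. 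The main obstacle is precisely this patching step: in \cite{BuxGon-99} it is automatic from the Salvetti product structure, whereas here it is what Lemma \ref{lem:convex} and Remark \ref{rmk:obs3-from-bux-gon} are put in place to secure. Finally, the ``In particular'' statement follows because each $M_v$, being a mapping cylinder over the contractible base $\widetilde{X}_v^{\dagger}$, deformation retracts onto a cone on $\mathrm{Lk}^{\uparrow}_{\widetilde{X}}(\widetilde{X}_v^{\dagger})$ with apex a chosen point of $\widetilde{X}_v^{\dagger}$, so $\widetilde{M}\cup h^{-1}([b,c])$ is homotopy equivalent to $h^{-1}([b,c])$ with the ascending links coned off as claimed.
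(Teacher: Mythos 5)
Your identification of $\widetilde{M}\cup h^{-1}([b,c])$ with a concrete subspace of $\widetilde{X}$ (your tubular neighbourhoods $N_v$) is in the spirit of the paper's argument, which identifies this union with the subcomplex of $\widetilde{X}$ spanned by the vertices of height at least $a$, cut at height $c$. The gap is in the deformation retraction itself. On a cube $c$ your homotopy fixes the ascending coordinates and only moves the $F_c^{bot}$-coordinates towards $o^{\widetilde{X}}_{\widetilde{X}_v^{\dagger}}(x)$, where $v$ is the bottom vertex of $c$; since $h$ restricted to $c$ depends only on the ascending coordinates (and since $\widetilde{X}_v^{\dagger}$ contains $F_c^{bot}$, so this projection does not change the $F_c^{bot}$-coordinates anyway), your homotopy preserves $h$ pointwise. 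It therefore cannot remove the part of $h^{-1}([a,b))$ lying in cubes that have a vertex strictly below height $a$, and removing exactly that part is the whole content of the lemma. The simplest failure is an edge $[v_0,v_1]$ with $h(v_0)<a$ and $h(v_1)>b$: its points at heights in $[a,b)$ lie in no cube spanned by vertices of height $\geq a$, hence not in $\widetilde{M}$, and must be pushed up to height $b$, yet your prescription fixes them (there are no $F_c^{bot}$-coordinates to move at all). So $r_1$ does not land in $\widetilde{M}\cup h^{-1}([b,c])$. Relatedly, your preliminary claim that no cube meets its top face inside the slab is not justified by $b-a<C_{min}$: a cube whose bottom vertex lies below $a$ can perfectly well have a top vertex at height in $(a,b)$, so the region to be collapsed is not simply a vertical product over the level complexes at heights in $[a,b)$.

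The paper handles this region by the standard Bestvina--Brady/Bux--Gonzalez collapse rather than by a single affine formula: for each cube $K$ meeting the $a$-level set in a face of positive dimension, the convex polyhedron $C=K\cap h^{-1}([a,c])$ deformation retracts onto $\partial C\setminus F$, where $F$ is the open codimension-one face $K\cap h^{-1}(a)$, fixing $\partial C\setminus F$; performing these collapses by downward induction on the dimension of the cubes (possible since $\widetilde{X}$ is finite-dimensional) retracts $h^{-1}([a,c])$ onto the subcomplex spanned by vertices of height at least $a$, cut at $c$, which is exactly $\widetilde{M}\cup h^{-1}([b,c])$. Your final paragraph (coning off via contractibility of level complexes) is fine, but to repair the main step you must allow the homotopy to move points in the ascending directions, i.e.\ replace your fixed-ascending-coordinate formula by this cube-by-cube polyhedral collapse; the role of Lemma \ref{lem:convex} and Remark \ref{rmk:obs3-from-bux-gon} is only to identify the resulting subspace with the union of mapping cylinders, not to produce the retraction.
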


\begin{proof}
The proof is analogous to the one of the Morse Lemma in \cite{BuxGon-99}. By definition of $C_{min}$, we can identify $\widetilde{M}\cup h^{-1}(\left[b,c\right])$ with the subspace of $\widetilde{X}$ obtained by first taking the subcomplex of $\widetilde{X}$ spanned by all vertices of height at least $a$ and then cutting it at height $c$. To prove the assertion we thus need to construct a deformation retraction of the subcomplex of $h^{-1}([a,c])$ consisting of cubes in $\widetilde{X}$ which intersect the $a$-level set in the interior of a face of dimension at least one onto their intersection with $\widetilde{M}\cup h^{-1}(\left[b,c\right])$. Since $\widetilde{X}$ is finite-dimensional such a deformation retraction can be constructed by induction on the dimension of cubes, starting from the highest-dimensional cubes. Indeed, given a convex polyhedron $C$ in $\mathbb{R}^n$, and a codimension one face $F$, there is a deformation retraction of $C$ onto $\partial C\setminus F$ that fixes $\partial C\setminus F$. Here we apply this in the situation where $C$ is the intersection of $h^{-1}([a,c])$ with a cube and $F$ is a codimension one face obtained by intersecting this cube with $h^{-1}(a)$ (see also \cite[Observation 6]{BuxGon-99}).
\end{proof}

As in \cite{BuxGon-99} there is a canonical analogue of Lemma \ref{lem:Morse} for descending links of level complexes. Using completely analogous proofs, we deduce the following analogues of \cite[Corollary 8]{BuxGon-99} and \cite[Theorem 9]{BuxGon-99}.

\begin{corollary}
	If all ascending and descending links of level complexes of $\widetilde{X}$ are $n$-connected (resp. $n$-acyclic), then for every closed interval $J$ the preimage $h^{-1}(J)\subseteq \widetilde{X}$ is $n$-connected (resp. $n$-acyclic).
\end{corollary}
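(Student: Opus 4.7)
The plan is to iterate the Morse Lemma by filtering $\widetilde{X}$ with preimages of an increasing family of intervals containing $J$. The key homotopical input is the following: if $A \hookrightarrow Y$ is a cofibration of CW complexes and $A$ is $n$-connected (resp. $n$-acyclic), then the inclusion $Y \hookrightarrow Y \cup_A CA$ induces an isomorphism on $\pi_k$ (resp.\ $H_k$) for every $k \leq n$. This is a consequence of the long exact sequences of the pair $(Y \cup_A CA, Y)$ combined with excision (using Blakers--Massey in the homotopy case), which identify the relative groups with $\pi_{k-1}(A)$ (resp.\ $\widetilde{H}_{k-1}(A)$); these vanish in the relevant range.

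First I would treat a bounded closed interval $J = [a, c]$. Choose a sequence $0 = \delta_0 < \delta_1 < \delta_2 < \cdots$ tending to $\infty$ with $\delta_{m+1} - \delta_m < C_{min}$, and set $Y_m := h^{-1}([a - \delta_m, c + \delta_m])$, so that $Y_0 = h^{-1}(J)$ and $\bigcup_m Y_m = \widetilde{X}$. Each inclusion $Y_{m-1} \hookrightarrow Y_m$ is obtained by two applications of the Morse Lemma --- the ascending version of Lemma \ref{lem:Morse} to extend downward and its descending analogue to extend upward --- each of which identifies the homotopy type of the extension as the attachment of mapping cylinders of bundle projections $\mathrm{Lk}_{\widetilde{X}}^{\uparrow/\downarrow}(\widetilde{X}_v^{\dagger}) \to \widetilde{X}_v^{\dagger}$ over the level complexes lying in the thin new strip. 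By Lemma \ref{lem:convex} each such level complex is contractible, so every such mapping cylinder is contractible and the attachment is homotopy equivalent to coning off the corresponding ascending or descending link. By hypothesis these links are $n$-connected (resp.\ $n$-acyclic), so by the key input above each such elementary attachment preserves $\pi_k$ (resp.\ $H_k$) for $k \leq n$. Passing to the colimit and using that $\widetilde{X}$ is contractible (being CAT(0)), we conclude that $\pi_k(h^{-1}(J)) = \pi_k(Y_0) \cong \pi_k(\widetilde{X}) = 0$ (resp.\ $H_k(h^{-1}(J)) = 0$) for all $k \leq n$, as desired.

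For an unbounded closed interval $J$, write $J = \bigcup_m J_m$ as an ascending union of bounded closed subintervals; then $h^{-1}(J) = \bigcup_m h^{-1}(J_m)$ is an ascending union along cofibrations, and the result follows from the bounded case together with the standard fact that $n$-connectivity (resp.\ $n$-acyclicity) is preserved under such unions. The main obstacle is making the cone attachment step precise when a thin strip contains many level complexes, since the corresponding mapping cylinders must be attached as a sequence compatible with the colimit argument, and invoking Blakers--Massey requires some care with basepoints --- but these are the same technicalities handled in the proof of \cite[Corollary 8]{BuxGon-99}, which the present argument closely follows.
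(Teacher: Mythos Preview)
Your proof is correct and follows essentially the same approach as the paper, which simply defers to the argument for \cite[Corollary~8]{BuxGon-99}: filter $\widetilde{X}$ by preimages of nested intervals containing $J$, use the Morse Lemma (and its descending analogue) to identify each elementary extension with coning off $n$-connected/$n$-acyclic links, and conclude by comparing with the contractible total space in the colimit. Your explicit treatment of the unbounded-interval case and the care taken with the cone-attachment step go slightly beyond what the paper spells out, but the strategy is identical.
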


\begin{theorem}\label{thm:Prelim-BNSR}
 Let $h:\widetilde{X}\to \mathbb{R}$ be a height map and $\chi : G=\pi_1(\widetilde{X})\to \mathbb{R}$ an associated character. Assume that for every vertex $v\in \widetilde{X}^{(0)}$ the ascending links $Lk^{\uparrow}_{\widetilde{X}}(\widetilde{X}^{\dagger}_v)$are $n$-acyclic (resp. $n$-connected). Then $\chi \in \Sigma^{n+1}(G,\mathbb{Z})$, resp. $\chi\in \Sigma^{n+1}(G)$.
 
 Analogously, if for every vertex $v\in \widetilde{X}^{(0)}$ the descending links $Lk^{\downarrow}_{\widetilde{X}}(\widetilde{X}^{\dagger}_v)$ are $n$-acyclic (resp. $n$-connected), then $-\chi \in \Sigma^{n+1}(G,\mathbb{Z})$, resp. $-\chi\in \Sigma^{n+1}(G)$.
\end{theorem}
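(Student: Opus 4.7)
The plan is to mirror Bux and Gonzalez's proof of \cite[Theorem 9]{BuxGon-99}, now in the setting of an arbitrary CAT(0) cube complex using the infrastructure built in Lemmas \ref{lem:convex} and \ref{lem:Morse}. By the standard characterization of BNSR invariants (e.g.\ as in \cite{BieStr-92}), it suffices to show that the positive half-space $h^{-1}([0,\infty)) \subseteq \widetilde{X}$ is $n$-connected (resp.\ $n$-acyclic); since $\widetilde{X}$ is contractible, this will follow if the inclusion $h^{-1}([0,\infty)) \hookrightarrow \widetilde{X}$ induces isomorphisms on $\pi_k$ (resp.\ $H_k$) for all $k \leq n$.

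To obtain this, I would exhaust $\widetilde{X} = \bigcup_k h^{-1}([-t_k, \infty))$ via an unbounded sequence $0 = t_0 < t_1 < \cdots$ with consecutive gaps $t_{k+1}-t_k < C_{min}$. Applying Lemma \ref{lem:Morse} at each step identifies the inclusion $h^{-1}([-t_k,\infty)) \hookrightarrow h^{-1}([-t_{k+1},\infty))$, up to homotopy, with the operation of gluing in the mapping cylinders $M_v$ of the ascending-link bundle projections $Lk^{\uparrow}_{\widetilde{X}}(\widetilde{X}_v^{\dagger}) \to \widetilde{X}_v^{\dagger}$ over all level complexes at heights in $[-t_{k+1}, -t_k)$. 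By Lemma \ref{lem:convex} each $\widetilde{X}_v^{\dagger}$ is contractible, so $M_v$ deformation retracts to the cone on $Lk^{\uparrow}_{\widetilde{X}}(\widetilde{X}_v^{\dagger})$.

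The hypothesis that each ascending link $L := Lk^{\uparrow}_{\widetilde{X}}(\widetilde{X}_v^{\dagger})$ is $n$-connected (resp.\ $n$-acyclic) implies that attaching $CL$ to a space $Z$ along $L$ yields an inclusion $Z \hookrightarrow Z \cup_L CL$ that is $(n+1)$-connected (resp.\ an isomorphism on $H_k$ for $k \leq n$), because the relative pair $(CL, L)$ has vanishing $\pi_k$ (resp.\ $H_k$) for $k \leq n+1$. Each step of the filtration therefore induces isomorphisms on $\pi_k$ (resp.\ $H_k$) for $k \leq n$; passing to the direct limit shows that $h^{-1}([0,\infty)) \hookrightarrow \widetilde{X}$ does so as well, and contractibility of $\widetilde{X}$ then gives the desired $n$-connectivity (resp.\ $n$-acyclicity). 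The statement for $-\chi$ follows by the symmetric argument, replacing $h$ by $-h$, which interchanges ascending and descending links.

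The main obstacle, relative to \cite{BuxGon-99}, is ensuring that the Morse-theoretic machinery and the identification of $M_v$ with a cone on the ascending link extend from the universal cover of a Salvetti complex to an arbitrary CAT(0) cube complex. This has essentially been absorbed into Lemma \ref{lem:convex} (contractibility of level complexes), Remark \ref{rmk:obs3-from-bux-gon} (the product-bundle structure of the link) and Lemma \ref{lem:Morse} itself; once these are in place, the direct-limit iteration described above is a routine bookkeeping exercise.
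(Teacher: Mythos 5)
Your argument is correct and takes essentially the same route as the paper, which proves this statement by following Bux--Gonzalez's proof of their Theorem 9: reduce to (essential) $n$-connectivity/$n$-acyclicity of superlevel sets, use the Morse Lemma to realise each downward extension as coning off ascending links of the contractible level complexes, and pass to the exhaustion of $\widetilde{X}$. The only point worth noting is that Lemma \ref{lem:Morse} is stated for finite $c$, while you apply it to $h^{-1}([-t_k,\infty))$; this is harmless, since the retraction in its proof is supported near the bottom and works verbatim for $c=\infty$ (alternatively, take a colimit over $c$), so your proposal stands as written.
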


We now deduce Theorem \ref{thm:Main-BNSR} from Theorem \ref{thm:Prelim-BNSR} and the following results, which generalise one of the implications of Lemmas 12 and 13 in \cite{BuxGon-99}. 

\begin{lemma}\label{lem:BG-12}
	Let $v\in \widetilde{X}^{(0)}$ be a vertex and let $\widetilde{X}_v^{\dagger}$ be the associated level complex. 
	If for every cube in $\widetilde{X}_v^{\dagger}$ its preimage in $Lk^{\uparrow}_{\widetilde{X}}(\widetilde{X}^{\dagger}_v)$ (resp. $Lk^{\downarrow}_{\widetilde{X}}(\widetilde{X}^{\dagger}_v)$) under the bundle projection is $n$-acyclic/ $n$-connected, then $Lk^{\uparrow}_{\widetilde{X}}(\widetilde{X}^{\dagger}_v)$ (resp. $Lk^{\downarrow}_{\widetilde{X}}(\widetilde{X}^{\dagger}_v)$) is $n$-acyclic/ $n$-connected.
\end{lemma}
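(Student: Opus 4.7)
The plan is to adapt the approach of \cite[Lemma 12]{BuxGon-99} to our more general setting. Write $B := \widetilde{X}^{\dagger}_v$ and $E := Lk^{\uparrow}_{\widetilde{X}}(\widetilde{X}^{\dagger}_v)$, with bundle projection $\pi : E \to B$; the descending case is entirely analogous. Two earlier results will be crucial: first, Lemma \ref{lem:convex} gives that $B$ is contractible; second, Remark \ref{rmk:obs3-from-bux-gon} equips $\pi$ with a canonical product structure over each open cube of $B$, from which one deduces that for any two closed cubes $c_1, c_2 \subseteq B$,
\[
	\pi^{-1}(c_1) \cap \pi^{-1}(c_2) = \pi^{-1}(c_1 \cap c_2),
\]
which is either empty or the preimage of a single cube (since the intersection of two cubes in a CAT(0) cube complex is either empty or itself a cube), and hence $n$-acyclic (respectively $n$-connected) by hypothesis.

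Next, I will enumerate the cubes of $B$ as $c_1, c_2, \ldots$ so that the proper faces of $c_i$ appear among $c_1, \ldots, c_{i-1}$, starting with the vertex $c_1 = v$ and listing cubes by increasing dimension. Setting $B_i := c_1 \cup \cdots \cup c_i$ and $E_i := \pi^{-1}(B_i)$, I will prove by induction on $i$ that each $E_i$ is $n$-acyclic (respectively $n$-connected); a direct limit argument handles the case in which $B$ is infinite. The base case is immediate from the hypothesis applied to $c_1$. In the inductive step one writes $E_{i+1} = E_i \cup \pi^{-1}(c_{i+1})$ with intersection $\pi^{-1}(\partial c_{i+1})$, then applies Mayer-Vietoris for the acyclicity statement and van Kampen together with its higher-degree analogues for the connectivity statement.

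The main obstacle will be controlling the intersection $\pi^{-1}(\partial c_{i+1})$: since $\partial c_{i+1}$ is a sphere of dimension $\dim(c_{i+1}) - 1$ rather than contractible, its preimage is not $n$-acyclic/connected on its own, yet the Mayer-Vietoris/van Kampen step still requires some degree of connectivity on this piece. To resolve this I will run a nested induction of the same shape applied to $\partial c_{i+1}$ in place of $B$, exploiting that the preimages of all proper faces of $c_{i+1}$ are $n$-acyclic/connected by hypothesis and that $\partial c_{i+1}$ is assembled from these pieces according to the combinatorial pattern of a $(\dim(c_{i+1}) - 1)$-sphere. The only substantive departure from \cite{BuxGon-99} is that their use of the explicit Salvetti-complex combinatorics to control the bundle structure will be replaced in our setting by the general convexity criterion of Lemma \ref{lem:BB-convex} together with the product structure of Remark \ref{rmk:obs3-from-bux-gon}.
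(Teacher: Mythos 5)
Your cube-by-cube induction does not work as set up, and the difficulty you flag at the end is not a deferrable technicality but a fatal flaw of the chosen ordering. If you add the cubes of $B=\widetilde{X}_v^{\dagger}$ by increasing dimension, then at an intermediate stage $B_i$ is, say, the full $1$-skeleton of $B$ (or the boundary $\partial c$ of a square), and the inductive claim that $E_i=\pi^{-1}(B_i)$ is $n$-acyclic/$n$-connected is simply false there: using the product structure of Remark \ref{rmk:obs3-from-bux-gon} over open edges and the connectivity of cube preimages, an edge cycle of $B_i$ lifts to a $1$-cycle in $E_i$ whose image under $\pi_*$ is nontrivial in $H_1(B_i)$, so $H_1(E_i)\neq 0$. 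Consequently the induction breaks before the $2$-cubes are ever attached. The same issue is what makes the attaching step impossible in your scheme: when $E_{i+1}=E_i\cup\pi^{-1}(c_{i+1})$ with intersection $\pi^{-1}(\partial c_{i+1})$, Mayer--Vietoris identifies $H_{k+1}(E_{i+1})$ with $H_k(\pi^{-1}(\partial c_{i+1}))$ in the range where the two sides are acyclic, and $\pi^{-1}(\partial c_{i+1})$ carries sphere-like homology. Your proposed nested induction over $\partial c_{i+1}$ can only compute that homology; it cannot improve it, so it does not deliver the $(n-1)$-acyclicity/connectivity (or the injectivity on homology) that the Mayer--Vietoris/van Kampen step requires. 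A clean way to see that the whole scheme is unsalvageable as stated: your argument only uses that preimages of cubes are $n$-acyclic/connected and that intersections of cube preimages are cube preimages. Both facts hold for the trivial product bundle with $n$-acyclic fibre $F$ over the boundary of a square, where the total space is homotopy equivalent to $S^1\times F$ and the conclusion fails. Hence no argument using only these inputs can prove the lemma.

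What is missing is an essential use of the convexity/contractibility of the level complex (Lemma \ref{lem:convex}); in your write-up it is mentioned but never actually enters the induction, which would apply verbatim to the restriction of the bundle over $B^{(1)}$ or $\partial c$ and prove a false statement. The paper's own proof is precisely the only-if direction of \cite[Lemma 12]{BuxGon-99}: there the ascending link of a level complex is reconstructed from the fibres over midpoints of cubes, glued according to the combinatorics of the level complex itself, and it is the convexity (hence CAT(0)-ness and contractibility) of the level complex that allows the assembled space to inherit the connectivity of the pieces. If you want a self-contained argument in your style, you would either have to choose an assembly order in which every partial union is contractible and every attaching locus is a face, or a contractible union of faces, of the new cube whose preimage you control (for instance via a collapsing order on the CAT(0) cube complex $B$, proving a correspondingly stronger inductive statement), or replace the linear induction by a nerve/homotopy-colimit argument for the closed cover of $Lk^{\uparrow}_{\widetilde{X}}(\widetilde{X}^{\dagger}_v)$ by preimages of closed cubes, whose nonempty intersections are again cube preimages and whose nerve is homotopy equivalent to the contractible complex $B$.
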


\begin{lemma}\label{lem:BG-13}
	Assume that for every vertex $v\in X^{(0)}$ and every (possibly empty) dead simplex $\sigma\in L_{v,X}^{\dagger}$ all ascending living links $Lk_{L_{v,X}^{\ast,\uparrow}}(\sigma)$ are $(n-\dim(\sigma)-1)$-acyclic, respectively, $L_{v,X}^{\ast,\uparrow}$ is, additionally, $n$-connected. Then for every cube in $\widetilde{X}_v^{\dagger}$ its preimage in $Lk^{\uparrow}_{\widetilde{X}}(\widetilde{X}^{\dagger}_v)$ under the bundle projection is $n$-acyclic, respectively $n$-connected.
	
	An analogous result holds for descending (living) links.
\end{lemma}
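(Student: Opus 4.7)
The plan is to induct on $k = \dim(c)$, closely following the strategy of \cite[Lemma 13]{BuxGon-99}. The key structural input is Remark \ref{rmk:obs3-from-bux-gon}: over the interior of each face $c' \leq c$, the bundle $\pi$ trivialises as $\mathrm{int}(c') \times F_{c'}$, where $F_{c'} := Lk_{L_{w,X}^{\ast,\uparrow}}(\sigma_{c'})$ for any vertex $w \in c'$ (the choice being immaterial by the observation following Theorem \ref{thm:Main-BNSR}). Whenever $c' \subseteq c''$, containment of dead simplices $\sigma_{c''} \supseteq \sigma_{c'}$ yields an inclusion of fibres $F_{c''} \hookrightarrow F_{c'}$ (smaller simplex gives larger link), and these inclusions govern how the strata of $\pi^{-1}(c)$ glue together.

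For the base case $k = 0$ the preimage $\pi^{-1}(w)$ is canonically identified with the full ascending living link $L_{w,X}^{\ast,\uparrow}$, which is $n$-acyclic by the hypothesis applied to $\sigma = \emptyset$ (so that $\dim(\sigma) = -1$) and $n$-connected under the stronger form of the hypothesis.

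For the inductive step, fix a vertex $w$ of $c$ and realise $\pi^{-1}(c)$ as the homotopy pushout
\[
\pi^{-1}(c) \;\simeq\; \bigl(c \times F_c\bigr) \cup_{\partial c \times F_c} \pi^{-1}(\partial c),
\]
where $\partial c \times F_c$ sits inside $\pi^{-1}(\partial c)$ via the inclusions $F_c \hookrightarrow F_{c'}$ for faces $c' \subset c$. The factor $c \times F_c \simeq F_c$ is $(n-k)$-acyclic by hypothesis (since $\dim(\sigma_c) = k - 1$). For $\pi^{-1}(\partial c)$ we run a secondary induction using the cover of $\partial c$ by its $2k$ codimension-one faces: all finite intersections are preimages of lower-dimensional faces of $c$, and so are $n$-acyclic by the main induction. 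A Mayer--Vietoris spectral sequence argument (in effect a nerve-lemma argument) then gives that $\pi^{-1}(\partial c)$ has, in degrees $\leq n$, the homology of the nerve of this cover, which is homeomorphic to $\partial c \cong S^{k-1}$. The main Mayer--Vietoris sequence for the pushout, combined with the K\"unneth formula applied to $\partial c \times F_c \simeq S^{k-1} \times F_c$, then shows that the spherical class in $H_{k-1}(\pi^{-1}(\partial c))$ cancels against its counterpart in $H_{k-1}(\partial c \times F_c)$, leaving $H_i(\pi^{-1}(c)) = 0$ for $1 \leq i \leq n$. The connectivity version is handled analogously: van Kampen controls $\pi_1$, after which Hurewicz upgrades $n$-acyclicity to $n$-connectivity. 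The statement for descending living links is entirely symmetric.

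The main obstacle is the bookkeeping inside this double induction --- specifically, verifying that the non-trivial spherical class in $H_{k-1}$ of $\pi^{-1}(\partial c)$ does cancel when passed through the Mayer--Vietoris sequence of the pushout, and does not survive to produce a stray class in degree $k-1$. Once the product structure of Remark \ref{rmk:obs3-from-bux-gon} is in hand, this reduces to a direct transcription of the corresponding computation in \cite{BuxGon-99}.
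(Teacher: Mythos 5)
Your proposal is correct, but it organises the induction differently from the paper, so a comparison is worthwhile. You decompose $\pi^{-1}(c)$ in one step as the pushout $\left(c\times F_c\right)\cup_{\partial c\times F_c}\pi^{-1}(\partial c)$, compute $\pi^{-1}(\partial c)$ in degrees $\leq n$ by a Mayer--Vietoris spectral sequence (nerve) argument over the facet cover, and then kill the resulting $S^{k-1}$-class by K\"unneth together with the observation that a parallel copy $\partial c\times\{d\}$, $d\in F_c$, is a section-type cycle hitting the generator of $H_{k-1}(\pi^{-1}(\partial c))$ (since the bundle projection splits it). The paper instead slices the cube through midpoints using product decompositions $c=c_1\times c_2\times[0,1]$ and reconstructs $\pi^{-1}(c)$ by iterated \emph{binary} gluings of two pieces along the preimage of a midpoint slice, so that only the elementary two-set Mayer--Vietoris fact (two $m$-acyclic pieces glued along an $(m-1)$-acyclic piece yield an $m$-acyclic union) and the matching degree count $(n-\dim\sigma-1)$ are needed -- no spectral sequence, no K\"unneth, no tracking of a spherical class. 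Both arguments rest on the same two inputs: the product structure of the link bundle over open cells (Remark \ref{rmk:obs3-from-bux-gon}) with fibres identified with ascending living links of dead simplices, and the inclusions $F_c\hookrightarrow F_{c'}$ for faces $c'\subseteq c$. Your route is more global and makes the role of the boundary sphere explicit; the paper's route trades that for lighter homological machinery. Two small points you should make explicit when writing this up: first, the cancellation step genuinely needs $F_c\neq\emptyset$, and the hypothesis delivers this exactly when it is needed, namely when $k\leq n+1$ (so that $F_c$ is $(n-k)$-acyclic with $n-k\geq-1$, while the spherical class of $\pi^{-1}(\partial c)$ lives in degree $k-1\leq n$); when $k>n+1$ the interior fibre may be empty, $\pi^{-1}(c)=\pi^{-1}(\partial c)$, and the nerve computation alone closes the induction. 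Second, your final remark that the cancellation is ``a direct transcription of the corresponding computation in \cite{BuxGon-99}'' is slightly off, since Bux--Gonzalez (and this paper) use the midpoint-slicing decomposition rather than your pushout; the computation is yours to carry out, but the section argument above does close it, and the surjectivity onto the direct sum in degree $k-1$ uses in addition that a fibrewise class $\{pt\}\times\alpha$ dies in $H_{k-1}(\pi^{-1}(\partial c))$ because it factors through the $n$-acyclic preimage of a facet.
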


The main idea in the proof of Lemmas \ref{lem:BG-12} and \ref{lem:BG-13} is that for every vertex $v\in \widetilde{X}^{(0)}$ we can reconstruct the ascending link $Lk^{\uparrow}_{\widetilde{X}}(\widetilde{X}^{\dagger}_v)$ from the preimages of midpoints of cubes in $\widetilde{X}_v^{\dagger}$ under the bundle projection $\pi_{\widetilde{X}_v^{\dagger}}^{\widetilde{X},\uparrow}:Lk^{\uparrow}_{\widetilde{X}}(\widetilde{X}^{\dagger}_v)\to \widetilde{X}^{\dagger}_v$. Since the preimages of such midpoints can be canonically identified with ascending living links $Lk_{L_{w,\widetilde{X}}^{\ast,\uparrow}}(\sigma)$ of dead simplices, this allows us to determine the connectivity properties of $Lk^{\uparrow}_{\widetilde{X}}(\widetilde{X}^{\dagger}_v)$ using Mayer--Vietoris and Seifert van Kampen type arguments.

\begin{proof}[Proof of Lemma \ref{lem:BG-12}]
The proof is completely analogous to the proof of the only if direction in Lemma 12 of \cite{BuxGon-99}.
\end{proof}

\begin{proof}[Proof of Lemma \ref{lem:BG-13}]

 The proof follows along the same lines as the proof that (2) implies (1) in Lemma 13 of \cite{BuxGon-99}. However, a bit more care is required, as the links of different vertices in $\widetilde{X}$ may be different. We thus provide an adapted version of the argument that takes this into consideration.
 
 We first observe that the connectivity properties of the complexes $Lk_{L_{v,X}^{\ast,\uparrow}}(\sigma)$ induce analogous connectivity properties for the complexes $Lk_{L_{v,\widetilde{X}}^{\ast,\uparrow}}(\sigma)$, which are defined in the same way for all vertices $v\in \widetilde{X}^{(0)}$.
 
 The proof consists of reconstructing the preimage of a (closed) cube of $X_{v}^{\dagger}$ under the bundle projection $\pi_{\widetilde{X}^{\dagger}_v}^{\widetilde{X},\uparrow}:{\rm Lk}^{\uparrow}_{\widetilde{X}}(\widetilde{X}^{\dagger}_v)\to \widetilde{X}^{\dagger}_v$ from the ascending living links $Lk_{L_{v,\widetilde{X}}^{\ast,\uparrow}}(\sigma)$ of (possibly empty) dead simplices $\sigma\in L_{v,\widetilde{X}}^{\dagger}$ for $v\in \widetilde{X}^{(0)}$ by gluings along subcomplexes.
 
 For this we first observe that the preimage $(\pi_{\widetilde{X}^{\dagger}_v}^{\widetilde{X},\uparrow})^{-1}(x)$ of an interior point $x$ of a cube $c$ in $\widetilde{X}_{v}^{\dagger}$ is naturally identified with the ascending living link ${\rm Lk}_{L_{w,\widetilde{X}}}^{\ast,\uparrow}(\sigma)$, where $w$ is a vertex of $c$ and the simplex $\sigma\in L_{w,\widetilde{X}}^{\dagger}$ corresponds to the cube $c$. In particular, $(\pi_{\widetilde{X}^{\dagger}_v}^{\widetilde{X},\uparrow})^{-1}(x)$ is $(n-\dim(\sigma)-1)$-acyclic by our assumptions.
 
 We will now denote by $m_c$ the midpoint of a cube $c$. Let $c=c_1\times c_2\times\left[0,1\right]$ be a decomposition of a cube $c$ of $\widetilde{X}_v^{\dagger}$ as a direct product, where we identify $\left[0,1\right]$ with one of the edges of $c$. Then 
 {\small
 \begin{align*}
 	&\left(\pi_{\widetilde{X}^{\dagger}_v}^{\widetilde{X},\uparrow}\right)^{-1}\left(c_1\times m_{c_2}\times \left[0,1\right]\right)=\\
 	&\left(\pi_{\widetilde{X}^{\dagger}_v}^{\widetilde{X},\uparrow}\right)^{-1}\left(c_1\times m_{c_2}\times \left\{0\right\}\right) \times\left\{0\right\}
 	\bigcup \left(\pi_{\widetilde{X}^{\dagger}_v}^{\widetilde{X},\uparrow}\right)^{-1}\left(c_1\times m _{c_2}\times \left\{\frac{1}{2}\right\}\right)\times \left[0,1\right]
 	\bigcup \left(\pi_{\widetilde{X}^{\dagger}_v}^{\widetilde{X},\uparrow}\right)^{-1}\left(c_1\times m_{c_2}\times \left\{1\right\}\right) \times\left\{1\right\}\\
 	&\simeq \left(\pi_{\widetilde{X}^{\dagger}_v}^{\widetilde{X},\uparrow}\right)^{-1}\left(c_1\times m_{c_2}\times \left\{0\right\}\right) \bigcup_{\left(\pi_{\widetilde{X}^{\dagger}_v}^{\widetilde{X},\uparrow}\right)^{-1}\left(c_1\times m _{c_2}\times \left\{\frac{1}{2}\right\}\right)} \left(\pi_{\widetilde{X}^{\dagger}_v}^{\widetilde{X},\uparrow}\right)^{-1}\left(c_1\times m_{c_2}\times \left\{1\right\}\right),
 \end{align*}}
 where we glue along cells, using that there are natural embeddings $$\left(\pi_{\widetilde{X}^{\dagger}_v}^{\widetilde{X},\uparrow}\right)^{-1}\left(c_1\times m_ {c_2}\times \left\{\frac{1}{2}\right\}\right)\hookrightarrow \left(\pi_{\widetilde{X}^{\dagger}_v}^{\widetilde{X},\uparrow}\right)^{-1}\left(c_1\times m_{c_2}\times \left\{0\right\}\right)$$ 
 and 
 $$\left(\pi_{\widetilde{X}^{\dagger}_v}^{\widetilde{X},\uparrow}\right)^{-1}\left(c_1\times m_{c_2}\times \left\{\frac{1}{2}\right\}\right)\hookrightarrow \left(\pi_{\widetilde{X}^{\dagger}_v}^{\widetilde{X},\uparrow}\right)^{-1}\left(c_1\times m_{c_2}\times \left\{1\right\}\right)$$ of cell complexes; here $\simeq$ denotes a homotopy equivalence. Note that analogous decompositions hold when $c_1=\emptyset$ or $c_2=\emptyset$ (i.e. $c= c_1\times[0,1]$ or $c=c_2\times [0,1]$).
 
 The above decompositions allow us to inductively reconstruct the preimage of a cube $c$ of $\widetilde{X}_v^{\dagger}$ under $\pi_{\widetilde{X}^{\dagger}_v}^{\widetilde{X},\uparrow}$ from the preimages of midpoints of cubes under $\pi_{\widetilde{X}^{\dagger}_v}^{\widetilde{X},\uparrow}$ by taking direct product decompositions of faces of $c$ (the induction is over the dimension of $c_1$). From the Meyer-Vietoris sequence, we see that if $X_1$ and $X_2$ are $m$-acyclic spaces and $Y\subseteq X_1,~X_2$ is a common subspace which is $(m-1)$-acyclic, then $X_1\cup_Y X_2$ is $m$-acyclic. Together with the assumptions on the connectivity properties of ascending living links, we see that $\left(\pi_{\widetilde{X}^{\dagger}_v}^{\widetilde{X},\uparrow}\right)^{-1}(c)$ is $n$-acyclic.
 
 The statement on $n$-connectivity follows analogously by using the Seifert van Kampen Theorem to show $1$-connectivity and then applying Hurewicz's Theorem to deduce $n$-connectivity.

\end{proof}

\section{Coabelian subgroups from ramified covers of cube complexes}\label{sec:Brady-type-examples}
In this Chapter we will explain how a special case of Kropholler's generalisation in \cite{Kro-21} of Brady's construction in \cite{Bra-99} allows us to produce examples of subgroups of hyperbolic groups of type $F_2$ and not $F_3$ of arbitrarily large corank. This provides the first proof of Theorem \ref{thm:Main}.

\subsection{Hyperbolic covers of 3-dimensional cube complexes}\label{sec:Brady-examples}

Our first construction of the subgroups of hyperbolic groups in Theorem \ref{thm:Main} arises as special case of Kropholler's generalisation \cite{Kro-21} of Brady's construction \cite{Bra-99}. In this section we will present these groups and summarise their most important properties. The proofs of all results presented here can be found in \cite{Bra-99,Kro-21}. It will be clear from our presentation that it can be varied to produce more examples with the same properties by varying the input data.

For $n\geq 2$ denote by $\Theta^{(n)}$ the graph with two vertices $\left\{0,1\right\}$ and $2n$ edges obtained by taking $2n$ copies of the unit interval $\left[0,1\right]$ and identifying all endpoints labelled 0 and all endpoints labelled 1. We label the edges $x_1,~y_1,~\dots, x_n,~y_n$ and fix an auxiliary orientation such that all edges labelled $x_i$ are oriented from $0$ to $1$ and all edges labelled $y_i$ from $1$ to $0$. 

Let $X^{(n)}= \Theta^{(n)}_1\times \Theta^{(n)}_2\times \Theta^{(n)}_3$ be the direct product of three copies of $\Theta^{(n)}$ and let $K^{(n)}=\left(0 \times 1 \times \Theta^{(n)}_3\right) \cup \left(\Theta^{(n)}_1\times 0 \times 1\right) \cup \left(1\times \Theta^{(n)}_2\times 0\right)$. 

By \cite[Section 3.1]{Kro-21}, for any prime $p>2n$ there is a $p^3$-fold ramified cover $f: Y^{(n)}\to X^{(n)}$ with ramification locus $K^{(n)}$ such that $Y^{(n)}$ admits a natural cubical structure for which $f$ is a combinatorial map of cube complexes and such that the induced metric on $Y^{(n)}$ is hyperbolic. More precisely, in a tubular neighbourhood of every connected component of $K^{(n)}$ the map $f$ is a $p$-fold ramified covering map and it is regular everywhere else. Since all links of $X$ are joins $F_1\ast F_2\ast F_3$ for $F_i=F=L_{v,\Theta^{(n)}}$ a discrete set with $2n$ elements labelled by the edges of $\Theta^{(n)}$ adjacent to $v$, this yields the following description of the links of vertices in $Y^{(n)}$:
 
 \begin{lemma}[{\cite{Bra-99,Kro-21}}]\label{lem:Links}
  Let $v\in Y^{(n)}$ be a vertex of $Y^{(n)}$. Then $L_{v,Y^{(n)}}$ admits the following description:
  \begin{enumerate}
   \item if $f(v)\notin K^{(n)}$, then $L_{v,Y^{(n)}}\cong L_{f(v),X^{(n)}}\cong F_1\ast F_2\ast F_3$;
   \item if $f(v)\in K^{(n)}$, then $L_{v,Y^{(n)}}$ is a $p$-fold connected ramified cover of $L_{f(v),X^{(n)}}$, which (up to reordering factors) is given by $\Gamma \ast F_{3}\to F_{1}\ast F_{2}\ast F_{3}$, where the map $\Gamma\to F_{1}\ast F_{2}$ on the first two factors is a connected regular $p$-fold cover for which the preimage of every 4-cycle in $F_1\ast F_2$ is a $4p$-cycle in $\Gamma$ and $F_{3}$ is the ramification locus.
  \end{enumerate}
 \end{lemma}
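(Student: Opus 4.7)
The plan is to analyse the local combinatorial structure of the ramified cover $f \colon Y^{(n)} \to X^{(n)}$ at each vertex and to read off the link. Since $X^{(n)}$ is a product of three copies of $\Theta^{(n)}$, the link of any vertex of $X^{(n)}$ is the join $F_1 \ast F_2 \ast F_3$, where each $F_i$ is the discrete set of $2n$ vertices of $L_{v,\Theta^{(n)}_i}$. For case~(1), the ramified cover is a genuine local homeomorphism (in fact a combinatorial covering) away from the ramification locus $K^{(n)}$, so a small neighbourhood of $v$ maps isomorphically as a cube complex to a small neighbourhood of $f(v)$, and $L_{v,Y^{(n)}} \cong L_{f(v),X^{(n)}} = F_1 \ast F_2 \ast F_3$ follows immediately.

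For case~(2), I would exploit the symmetry between the three components of $K^{(n)}$ and treat, say, a vertex $f(v)$ lying on the component $0 \times 1 \times \Theta^{(n)}_3$. Near such a vertex $X^{(n)}$ decomposes as a product of a $2$-dimensional transverse piece (coming from $\Theta^{(n)}_1$ near $0$ and $\Theta^{(n)}_2$ near $1$) with an interval in the $\Theta^{(n)}_3$-direction, so the link splits as $(F_1 \ast F_2) \ast F_3$ with $F_3$ being the factor tangent to the ramification locus. By construction in \cite{Kro-21} the $p$-fold ramified cover is modelled locally on a cyclic branched cover of a disk, ramified along the $\Theta^{(n)}_3$-factor and unramified in the transverse factor. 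Passing to links, the $F_3$-factor is therefore preserved, and the link in $Y^{(n)}$ takes the form $\Gamma \ast F_3$, where $\Gamma \to F_1 \ast F_2$ is exactly the link-level incarnation of the transverse $p$-fold branched cover.

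The remaining step is to identify $\Gamma$. Now $F_1 \ast F_2$ is the complete bipartite graph $K_{2n,2n}$, and its $4$-cycles are precisely the links in $F_1 \ast F_2$ of the $2$-cubes adjacent to $f(v)$ that are transverse to $K^{(n)}$. On any such transverse $2$-cube the local model of the branching is the standard $z \mapsto z^p$ map on a disk, which wraps the boundary square $p$ times; translating into links, the preimage of the corresponding $4$-cycle in $\Gamma$ is therefore a single $4p$-cycle. Connectedness and regularity of $\Gamma \to F_1 \ast F_2$ are inherited from the corresponding properties of $f$ near the chosen component of $K^{(n)}$. The main obstacle, and the point where I would lean most heavily on the construction in \cite{Bra-99,Kro-21}, is verifying that the cube-complex structure on $Y^{(n)}$ really is compatible with this local product decomposition, so that the link calculation honestly reduces to the transverse branched-cover model joined with the discrete link along $\Theta^{(n)}_3$; once this compatibility is granted, the description of $L_{v,Y^{(n)}}$ is immediate.
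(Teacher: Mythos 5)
Your Case (1) is fine, and your reduction of Case (2) to ``a transverse $p$-fold branched cover joined with $F_3$'' matches the tubular-neighbourhood description of $f$ that the present paper quotes from \cite{Bra-99,Kro-21} (the paper gives no independent proof of this lemma, so the comparison is with the cited construction). The genuine gap is in your identification of $\Gamma$. A $2$-cube adjacent to $f(v)$ and transverse to $K^{(n)}$ meets the ramification locus only in the corner vertex $f(v)$, so it contributes a single edge of $F_1\ast F_2$, not a $4$-cycle; a $4$-cycle of $F_1\ast F_2$ corresponds to a configuration of four such squares sharing that corner. In particular each individual transverse square, minus the corner, is simply connected and lifts isomorphically to each of its $p$ preimages: there is no ``local $z\mapsto z^p$ model on a transverse $2$-cube wrapping the boundary square $p$ times''. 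Whether the preimage of a given $4$-cycle is a single $4p$-cycle or instead $p$ disjoint $4$-cycles (these are the only options, $p$ being prime) is precisely the question of whether the monodromy of the cover $\Gamma\to F_1\ast F_2$ around that $4$-cycle has order $p$, and this is not forced by the mere existence of a $p$-fold cover ramified along $K^{(n)}$ -- it is the actual content of the lemma.

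In \cite{Bra-99,Kro-21} this is arranged by the explicit choice of the covering: the monodromy is a homomorphism to $\Z/p\Z$ determined by labels on the edges of $\Theta^{(n)}$, chosen (using $p>2n$ prime) so that the label sum around every $4$-cycle of $F_1\ast F_2$ is nonzero mod $p$; connectedness and regularity of $\Gamma$ likewise come from this explicit cyclic construction rather than being ``inherited'' formally (a connected $p$-fold cover need not be regular in general). So the point at which you must lean on the references is not the compatibility of the cube structure with the local product decomposition, which is the obstacle you flag, but the verification of this monodromy condition; as written, your argument assumes the conclusion exactly there.
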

 
 We call a vertex satisfying (1) of \emph{Type 1} and a vertex satisfying (2) of \emph{Type 2}.
 
\subsection{Homomorphisms to free abelian groups} 
 
We now fix $n\geq 2$ and $p>2n$ a prime. We retain the notation from Section \ref{sec:Brady-examples}, except that we will usually drop the upper index $~^{(n)}$ whenever this leads to no confusion. For $1\leq j \leq 3$ we define $n-1$ continuous maps $h_{i,j}:\Theta^{(n)}_j\to S^1=\left[0,1\right]/0\sim 1$, $2\leq i \leq n$, by

\[
	t\mapsto \left\{
	\begin{array}{ll} 
		t& ,~t\in x_1\cup x_i\\
		1-t& ,~t\in y_1\cup y_i\\
		0 & ,~\mbox{else}
	\end{array} 
	\right.
\]
Then the induced homomorphisms $\phi_{i,j}:=h_{i,j,\ast}: \pi_1(\Theta^{(n)}_j)\to \pi_1(S^1)\cong \mathbb{Z}$ have image of index 2 if $n=2$ and are surjective if $n>2$. Moreover, the subset $\left\{\phi_{2,j},~\dots,\phi_{n,j}\right\}\subset {\rm Hom}(\pi_1(\Theta^{(n)}_j),\mathbb{R})$ is linearly independent. 

By extending affine linearly to cubes, we obtain continuous maps $h_i:=\sum_{j=1}^3 h_{i,j}:X\to S^1$ and linearly independent non-trivial homomorphisms $\phi_i:=\sum_{j=1}^3 \phi_{i,j}: \pi_1(X)\to \mathbb{Z}$. In particular, the continuous map $h:= \left(h_2,\dots, h_{n}\right): X\to (S^1)^{n-1}$ induces a homomorphism $\phi=h_{\ast}=\left(\phi_2,\dots,\phi_{n}\right): \pi_1(X) \to \mathbb{Z}^{n-1}$ with free abelian image of rank $n-1$. By composing with the ramified covering $f: Y\to X$ we define $\overline{h}_i:=h_i \circ f$, $\overline{h}:=h\circ f=\left(\overline{h}_2,\dots,\overline{h}_{n}\right)$, $\overline{\phi}_i:= \overline{h}_{i,\ast}: \pi_1(Y)\to \pi_1(S^1)$, $\overline{\phi}:=\overline{h}_{\ast}=\left(\overline{\phi}_2,\dots,\overline{\phi}_{n}\right)$. Theorem \ref{thm:Main} is a direct consequence of the following theorem:
\begin{theorem}
	The image of the homomorphism $\overline{\phi}:\pi_1(Y)\to \mathbb{Z}^{n-1}$ has finite index and $\ker(\overline{\phi})\leq \pi_1(Y)$ is a finitely presented subgroup of the hyperbolic group $\pi_1(Y)$ which is not of type $\mathcal{F}_3$.
\end{theorem}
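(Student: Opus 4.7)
The plan is to establish the three claims in sequence. First I tackle the finite-index assertion; then I invoke Theorem \ref{thm:Sigma-BNSR} to reduce the finiteness statements to conditions on characters lying in the subsphere $S(G,\ker(\overline{\phi}))$. The failure of $F_3$ follows quickly from Kropholler's work, while finite presentability requires a careful link analysis at both vertex types of $Y$ supplied by Lemma \ref{lem:Links}.

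By construction each $\phi_{i,j}:\pi_1(\Theta_j)\to\Z$ is surjective (or of index $2$ when $n=2$) and $\{\phi_{i,j}\}_{i=2}^n$ is linearly independent; hence $\phi=(\phi_2,\ldots,\phi_n):\pi_1(X)\to\Z^{n-1}$ has finite-index image. Since $K$ is a $1$-subcomplex of the $3$-complex $X$, the map $f:Y\to X$ restricts to an honest finite-sheeted cover of $X\setminus K$ and $\pi_1(X\setminus K)\twoheadrightarrow\pi_1(X)$ is surjective by a general-position argument; therefore $f_{\ast}\pi_1(Y)$, and hence $\overline{\phi}(\pi_1 Y)=\phi(f_{\ast}\pi_1 Y)$, has finite index in $\Z^{n-1}$. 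Consequently $S(G,\ker(\overline{\phi}))$ is the $(n-2)$-subsphere of classes $[\chi]$ with $\chi=\sum_{i=2}^n\lambda_i\overline{\phi}_i$, $(\lambda_i)\ne 0$, and by Theorem \ref{thm:Sigma-BNSR} it suffices to show that this subsphere is contained in $\Sigma^2(G)$ but not in $\Sigma^3(G)$. The non-containment is immediate: the single character $\overline{\phi}_k$ is of the form studied by Kropholler in \cite{Kro-21}, whose kernel is known to be $F_2$ but not $F_3$, so $[\overline{\phi}_k]\notin\Sigma^3(G)$ while $[\overline{\phi}_k]\in S(G,\ker(\overline{\phi}))$.

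For the containment in $\Sigma^2(G)$ I would apply Theorem \ref{thm:Main-BNSR} with $n=1$ to every nonzero $\chi$ in the subsphere, checking at each vertex $v\in Y^{(0)}$ and each dead simplex $\sigma$ the required connectivity of ascending and descending living links. Along the edge labelled $x_k$ (resp.\ $y_k$) in factor $j$, $\chi$ increases (resp.\ decreases) by $\lambda_k$ if $k\ge 2$ and by $\sum_{i=2}^n\lambda_i$ if $k=1$; a short case analysis shows that for any nonzero $\chi$ at least two of the numbers $\{\lambda_2,\ldots,\lambda_n,\sum_i\lambda_i\}$ are nonzero, so at every vertex of every factor $\Theta_j$ the sets $F_j^{\uparrow}$ and $F_j^{\downarrow}$ each contain at least two elements. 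At a Type 1 vertex $L_{v,Y}=F_1\ast F_2\ast F_3$ is a join of discrete sets, so the ascending/descending living link is a join of three nonempty discrete sets and therefore simply connected; the dead-simplex conditions reduce to nonemptiness of joins of subsets of these sets, all immediate from the same bound. At a Type 2 vertex $L_{v,Y}=\Gamma\ast F_3$; since $F_3^{\uparrow}\ne\emptyset$, the join connectivity formula $\mathrm{conn}(A\ast B)=\mathrm{conn}(A)+\mathrm{conn}(B)+2$ reduces simple connectivity of the ascending living link to connectedness of $\Gamma^{\uparrow}$. This subgraph is the preimage in $\Gamma$ of the complete bipartite graph $K_{|F_a^{\uparrow}|,|F_b^{\uparrow}|}\subseteq F_a\ast F_b$, and since $|F_a^{\uparrow}|,|F_b^{\uparrow}|\ge 2$ it contains a $4$-cycle whose lift in $\Gamma$ is a $4p$-cycle by Lemma \ref{lem:Links}; the monodromy of this $4$-cycle in the deck group $\Z/p$ is thus nontrivial, hence surjective by primality of $p$, so the restricted cover $\Gamma^{\uparrow}$ is connected. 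The dead-simplex conditions at Type 2 vertices similarly reduce to connectedness/nonemptiness of $F_b^{\uparrow}\ast F_3^{\uparrow}$ and $F_3^{\uparrow}$, both handled by the same bounds. Descending links are covered because $-\chi$ also lies in the subsphere.

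The principal obstacle is this Type 2 link analysis: one must verify uniformly in $(\lambda_2,\ldots,\lambda_n)$ that the induced subgraph $\Gamma^{\uparrow}$ of the cover $\Gamma$ stays connected and its vertex-links remain large enough. The hypothesis $p>2n$ together with the fact that $4$-cycles in the base lift to $4p$-cycles in $\Gamma$ is precisely the input that makes this uniform connectivity check work, and is what forces the entire subsphere, rather than just isolated characters, to land in $\Sigma^2(G)$.
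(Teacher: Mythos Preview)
Your finite-index argument and your link analysis for finite presentability are essentially the paper's own proof, with one pleasant variation: where the paper argues that $\Gamma^{\uparrow}$ is connected by covering $A_1^{+}\ast A_2^{+}$ with overlapping $4$-cycles and lifting, you instead observe that a single $4$-cycle already has full monodromy in the deck group $\Z/p$ (its lift is a $4p$-cycle), so by primality the restricted cover is connected. Both work.

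The gap is in your ``not $F_3$'' step. You assert that $\ker(\overline{\phi}_k)$ is not of type $F_3$ ``by Kropholler'', but for $n>2$ the character $\overline{\phi}_k$ is not one of the Morse functions analysed in \cite{Kro-21}: it vanishes on every edge $x_j,y_j$ with $j\notin\{1,k\}$, whereas the height maps in \cite{Bra-99,Kro-21} assign nonzero weight to every edge of $\Theta^{(n)}$ (this is exactly what makes their ascending and descending links spheres of the correct dimension). So there is no off-the-shelf statement giving $[\overline{\phi}_k]\notin\Sigma^3(G)$. Nor can you substitute the genuine Brady--Kropholler diagonal character, since that character has weight $1$ on $x_1$ while any $\chi=\sum_{i\ge 2}\lambda_i\overline{\phi}_i$ has weight $\sum_{i\ge 2}\lambda_i$ on $x_1$ and weight $\lambda_i$ on $x_i$; for $n>2$ there is no choice of the $\lambda_i$ making all these weights equal and nonzero, so the diagonal character does not lie in $S(G,\ker(\overline{\phi}))$.

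The paper sidesteps this entirely with an Euler-characteristic argument: since $Y$ is a finite aspherical $3$-complex, $\pi_1(Y)$ has cohomological dimension at most $3$; if $\ker(\overline{\phi})$ were of type $F_3$ it would then be of type $FP$, and the short exact sequence $1\to\ker(\overline{\phi})\to\pi_1(Y)\to\Z^{n-1}\to 1$ (up to finite index) would force $\chi(Y)=\chi(\pi_1(Y))=0$. But \cite[Proposition~3.3]{Kro-21} computes $\chi(Y)=p^3(2-18n+24n^2-8n^3)-3p^2(2n-2)<0$. This is both cleaner and avoids having to locate any specific character outside $\Sigma^3$.
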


\begin{proof}
	For the first assertion, note that $f_*\colon \pi_1(Y)\to \pi_1(X)$ has finite index image, since $f$ is a ramified cover. Thus, $\phi\circ f_* = \overline{\phi}(\pi_1(Y))$ has finite index image in $\phi(\pi_1(X))$ and hence finite index in $\Z^{n-1}$.
	
	For the second assertion, by Theorem \ref{thm:Sigma-BNSR}, it suffices to show that $S(\pi_1(Y),\ker(\overline{\phi}))\subseteq \Sigma^2(\pi_1(Y))$. We will use Theorem \ref{thm:Main-BNSR} to prove this. First observe that every element of $S(\pi_1(Y),\ker(\overline{\phi}))$ has a representative of the form $\sum_{i=2}^{n}\lambda_i \overline{\phi}_i:\pi_1(Y)\to \mathbb{R}$ with $\left(\lambda_2,\dots,\lambda_{n}\right)\in \mathbb{R}^{n-1}\setminus \left\{0\right\}$ and, conversely, any such homomorphism defines an element of $S(\pi_1(Y),\ker(\overline{\phi}))$. 
	
	Let $\left[\overline{\psi}\right]=\left[\sum_{i=2}^{n}\lambda_i \overline{\phi}_i\right]\in S(\pi_1(Y),\ker(\overline{\phi}))$. We may assume that $\lambda_2\neq 0$; the other cases being analogous. To define a $\psi$-equivariant height map $\widetilde{\overline{h}}: \widetilde{Y}\to \mathbb{R}$ we proceed as follows. We first equip the edges of $\Theta$ with the following (signed) weights:
	\begin{itemize}
		\item the edges labelled $x_1$ and $y_1$ have weights $\sum_{i=2}^{n}\lambda_i$; and
		\item the edges labelled $x_i$ and $y_i$ have weights $\lambda_i$.
	\end{itemize}
We then equip $X$ with weights via the canonical identifications of $\Theta^{(n)}_i$ with $\Theta$. Finally, we equip $Y$ and $\widetilde{Y}$ with weights by lifting the weights assigned to $X$ with respect to the canonical composition $\widetilde{Y}\to Y\to X$. By definition these weights then equip $\widetilde{Y}$ with a $\psi$-equivariant height map $\widetilde{\overline{h}}:\widetilde{Y}\to\mathbb{R}$, where edges are mapped to $\mathbb{R}$ according to their weights (the sign of a weight determines if an edge is mapped according to its orientation or against its orientation).\footnote{This map is uniquely determined up to a translation of $\mathbb{R}$.} 

Note that $\overline{\psi}$ descends to a homomorphism $\psi=\sum_{i=2}^n\lambda_i\phi_i: \pi_1(X)\to \mathbb{R}$ and $\widetilde{\overline{h}}$ descends to a $\psi$-equivariant height map $\widetilde{h}: \widetilde{X}\to \mathbb{R}$

To apply Theorem \ref{thm:Main-BNSR}, for every vertex $v$ of $\widetilde{Y}$ we need to determine the connectivity properties of all ascending and descending living links of all dead simplices in $L_{v,\widetilde{Y}}$. We start by observing that the ascending and descending links of v in $\widetilde{Y}$ are the intersection of $L_{v,\widetilde{Y}}$ with the preimages of the ascending and descending living links of vertices in $X$ under the composition $\widetilde{Y}\to Y\to X$. Moreover, we can determine the ascending and descending links from the induced weights and orientations on $Y$ and $X$. 

We first analyse the ascending and descending links in $X$. For every vertex $v\in X$ we have $L^{\uparrow/\downarrow}_{v,X}=A_1^{+/-}\ast A_2^{+/-}\ast A_3^{+/-}$ with $A_i^{+/-}=L^{\uparrow/\downarrow}_{pr_i(v),\Theta^{(n)}_i}$, where $pr_i: X\to \Theta^{(n)}_i$ is the projection onto $\Theta^{(n)}_i$. 

To simplify notation for the remainder of the proof, we will now only analyse the ascending links. The arguments for the descending links are completely analogous.

By definition of $\widetilde{h}:\widetilde{X}\to\mathbb{R}$, at every vertex $v\in X$ there are at least two ascending edges. Thus, $|A_i^{+}|\geq 2$ and it is immediate that the ascending and descending links of the empty dead simplex are homotopy equivalent to wedges of 2-spheres and thus 1-connected.

Let now $\left\{d_1\right\}$ be a dead vertex of $ L_{v,X}$. We may assume that $d_1 \in F_1\setminus (A_1^{+}\cup A_1^{-})$ with the other cases being analogous. Then 
\[
{\rm Lk}_{L_{v,X}^{\ast,\uparrow}}(\left\{d_1\right\})=L_{v,X}^{\ast,\uparrow}\cap {\rm Lk}_{L_{v,X}}(\left\{d_1\right\})= (A_1^{+}\ast A_2^{+} \ast A_3^{+}) \cap (F_2\ast F_3)= A_2^{+}\ast A_3^{+}
\]
is a wedge of circles and thus $0$-connected.

Finally, if $\left\{d_1,d_2\right\}$ is a dead edge of $ L_{v,X}$ we may assume that $d_i\in F_i \setminus (A_i^{+}\cup A_i^{-})$; the other cases are again analogous. Then ${\rm Lk}_{L_{v,X}^{\ast,\uparrow}}(\left\{d_1,d_2\right\})=A_3^{+}\neq \emptyset$ is $(-1)$-connected. Since $v\in X$ was an arbitrary vertex, this shows that $\psi\in \Sigma^2(\pi_1(X))$.

We now turn our attention back to $Y$. Let $v\in Y$ be a vertex. 

If $v$ is of Type 1, then all of its associated links are isomorphic to those of $f(v)\in X$. Thus, for Type 1 vertices it follows from the above discussion that ${\rm Lk}_{L_{v,Y}^{\ast,\uparrow}}(\sigma)$ is $(1-\dim(\sigma)-1)$-connected for every dead simplex $\sigma$ of $L_{v,Y}$. 

Assume now that $v\in Y$ is a vertex of Type 2. The ascending living link of the empty simplex is the preimage of $L_{f(v),X}^{\ast,\uparrow}=A_1^{+}\ast A_2^{+}\ast A_3^{+}$ under the map on links induced by $f$, which (up to a permutation of the factors) is given by
\[
{\rm Lk}_{L_{v,Y}^{\ast,\uparrow}}(\emptyset)=L_{v,Y}^{\ast,\uparrow}=(\Gamma\ast F_3)^{\uparrow}=\Gamma^{\uparrow}\ast F_3^{\uparrow}=\Gamma^{\uparrow}\ast A_3^{+}.
\] 
 By Lemma \ref{lem:Links} the preimage of every $4$-cycle of $A_1^{+}\ast A_2^{+}$ in $\Gamma$ is a (connected) $4p$-cycle. As in \cite[Section 3.1]{Kro-21} we can cover the graph $A_1^{+}\ast A_2^{+}$ with a sequence of 4-cycles such that every 4-cycle has non-empty intersection with the union of all previous 4-cycles. It follows that the sequence of their preimages in $\Gamma^{\uparrow}$ consists of $4p$-cycles with the same property. In particular, their union is 0-connected and coincides with $\Gamma^{\uparrow}$. Thus, ${\rm Lk}_{L_{v,Y}^{\ast,\uparrow}}=\Gamma^{\uparrow}\ast A_3^{+}$ is homotopy equivalent to a (1-connected) wedge of 2-spheres.

For a dead 0-simplex $\left\{d\right\}$ of $L_{v,Y}^{\dagger}$ we have to consider two cases. Case 1 is that $f(d)\in F_1^{\dagger}\ast F_2^{\dagger}$ and Case 2 is that $f(d)\in F_3^{\dagger}$. In Case 2 the set $f^{-1}(f(d))=\left\{d\right\}$ consists of the single point $d$. It is then easy to see that ${\rm Lk}_{L_{v,Y}^{\ast,\uparrow}}(\left\{d\right\})=\Gamma^{\uparrow}$, which is 0-connected by the above discussion.  In Case 2 we may assume that $f(d)\in F_1^{\dagger}$. Since $L_{v,Y}=\Gamma \ast F_3$, there is an edge from $d$ to every point in $A_3^{+}$. Using that $\Gamma\to F_1\ast F_2$ is a covering map we see that there is at least one vertex $b\in \Gamma^{\uparrow}\cap {\rm Lk}_{L_{v,Y}^{\ast,\uparrow}}(\left\{d\right\})$. Necessarily for every such vertex $b$ we have $f(b)\in A_2^{+}$. We deduce that there is a non-empty subset $B_2\subset \Gamma$ with $f(B_2)\subset A_2^+$ such that ${\rm Lk}_{L_{v,Y}^{\ast,\uparrow}}(\left\{d\right\})=B_2\ast A_3^{+}$. In particular, ${\rm Lk}_{L_{v,Y}^{\ast,\uparrow}}(\left\{d\right\})$ is $0$-connected.

Finally, one readily checks that for every dead edge $\left\{d_1,d_2\right\}$ of $L_{v,Y}^{\dagger}$ we have ${\rm Lk}_{L_{v,Y}}^{\ast,\uparrow}(\left\{d_1,d_2\right\})\neq \emptyset$, because the same is true for ${\rm Lk}_{L_{f(v),X}}^{\ast,\uparrow}(\left\{f(d_1),f(d_2)\right\})$ and $L_{v,Y}\to L_{f(v),X}$ is a connected ramified cover.

This implies that ${\rm Lk}_{L_v^{\ast,\uparrow/\downarrow}}(\sigma)$ is $(1-\dim(\sigma)-1)$-connected for every dead simplex $\sigma$ of $L_v$. Theorem \ref{thm:Main-BNSR} then implies that $[\overline{\psi}]\in \Sigma^2(\pi_1(Y))$. Since $[\overline{\psi}]\in S(\pi_1(Y),\ker(\overline{\phi}))$ was arbitrary, we deduce that $S(\pi_1(Y),\ker(\overline{\phi}))\subseteq \Sigma^2(\pi_1(Y))$. In particular, by Theorem \ref{thm:Sigma-BNSR} $\ker(\overline{\phi})$ is finitely presented.

We still need to explain why $\ker(\overline{\phi})$ is not of type $F_3$. Suppose that the kernel was of type $F_3$, then it would be of type $FP$ and hence $Y$ would have Euler characteristic 0 \cite[Proposition IX.7.3 (d)]{Bro-82}. However, by \cite[Proposition 3.3]{Kro-21} the Euler characteristic of $Y$ is $p^3(2-18n+24n^2-8n^3)-3p^2(2n-2)$. Since $n\geq2$ we have $2-18n+24n^2-8n^3 < 0$ and so the Euler characteristic is negative.
\end{proof}

\begin{remark}
 Observe that our proof only used that the ascending and descending links of the restriction of every height map associated with a homomorphism $\psi$ to $\Theta$ both have at least two elements. Thus, our proof can be adapted to produce large families of examples of normal subgroups of hyperbolic groups of type $F_2$ and not $F_3$ with quotient $\mathbb{Z}^m$, for every $m\geq 1$, by applying it to the families of examples constructed in \cite{Kro-21}.
\end{remark}

\section{Coabelian subgroups of Lodha's examples and homological finiteness properties}\label{sec:Lodha-type-examples}

In \cite{Lod-18}, Lodha constructed another example of a finitely presented non-hyperbolic subgroup of a hyperbolic group. The example is constructed by first defining an explicit hyperbolic CAT(0) cube complex  contained in a direct product of three copies of a certain graph and then putting a Morse function on it. 

Here we present a variation of this construction, which produces examples with spheres of arbitrarily large dimension in $\Sigma^2(G)$ and thus finitely presented subgroups of hyperbolic groups arising as kernels of morphisms onto free abelian groups of arbitrarily large rank. To define our examples, we will start from a bipartite graph $\Gamma$ with certain additional properties. We use this graph $\Gamma$ to define a hyperbolic CAT(0) cube complex $X_{\Gamma}$ contained in a direct product of three complete bipartite graphs. Finally, we produce a family of height maps on $X_{\Gamma}$ which induce linearly independent characters on $\pi_1(X_{\Gamma})$ that span a high-dimensional sphere in $\Sigma^2(G)$.

\subsection{Definition of sizeable graphs of rank $n$}

We start by introducing the family of input graphs $\Gamma$ for our construction.

\begin{definition}
	Let $\G$ be a simplicial, bipartite graph on $A\sqcup B$. We say that $\G$ is {\em $n$-Morse suited} if for each $i\in \{1, \dots, n\}$ and $t\in\{-, +\}$, there are subsets $A_i^t\subset A, B_i^t\subset B$ such that: 
	\begin{itemize}
		\item $A_i^t\cap A_j^s \neq \emptyset$ if and only if $i = j$ and $s = t$.
		\item $B_i^t\cap B_j^s \neq \emptyset$ if and only if $i = j$ and $s = t$.
		\item $A = \cup_{i, s}A_i^s, B = \cup_{j, t}B_j^t$. 
	\end{itemize}
\end{definition}

\begin{definition}
	We say that an $n$-Morse suited graph $\G$ is a {\em sizeable graph of rank $n\in \mathbb{N}$}, if $\G$ contains no embedded cycles of length 4 and for each choice of $i, j\in \{1, \dots, n\}$ and $s, t\in \{-, +\}$ we have that the subgraph spanned by $A_i^s\sqcup B_j^t$ is connected. 
\end{definition}

One should compare this to the definition of a sizeable graph from \cite{Kro-21}, which correspond to sizeable graphs of rank 1 in our definition. The conditions serve various purposes. The bipartite condition will be used in the construction of the cube complex $X_{\Gamma}$ and to define a function to $\Z^n$. The condition on cycles of length 4 will be used to ensure hyperbolicity of $\pi_1(X_{\Gamma})$. The other conditions will ensure that certain ascending and descending links have appropriate connectivity properties. 

\subsection{Existence of sizeable graphs of rank $n$}

Before continuing we give a method to construct sizeable graphs of rank $n$. This method uses a similar technique to that employed in \cite[Definition 8]{Lod-18}. This construction is not optimal to create small cube complexes. One could employ the techniques used in the appendix of \cite{Kro-21} to give minimal examples of sizeable graphs of rank $n$.

\begin{definition}
	We say that an $n$-Morse suited graph $\G$ is {\em modular} if there are bijections $a_i^s\colon \Z/p\Z \to A_i^s$ and $b_j^t\colon \Z/p\Z\to B_j^t$, and for each choice of $i, j, s, t$ there is a set of integers $\tau$ such that there is an edge from $a_i^s(k)$ to $b_j^t(l)$ if and only if $l-k$ is congruent to an element of $\tau$ modulo $p$. 
\end{definition}

The advantage of modular $n$-Morse suited graphs is that checking that they are sizeable comes down to checking certain equalities modulo $p$. 
Moreover, defining a modular $n$-Morse suited graph is equivalent to assigning to each edge of the complete bipartite graph on $\{A_1^-, A_1^+,\dots, A_n^-, A_n^+\}\sqcup\{B_1^-, B_1^+,\dots, B_n^-, B_n^+\}$ a subset of the integers. 

\begin{construction}
	Let $\mathcal{K}$ be the complete bipartite graph on $$\{A_1^-, A_1^+,\dots, A_n^-, A_n^+\}\sqcup\{B_1^-, B_1^+,\dots, B_n^-, B_n^+\}.$$
	Let $E$ be the set of edges of $\mathcal{K}$ and pick an ordering $E = \{e_1, \dots, e_{(2n)^2}\}$ on them.
	We define a function $\sigma\colon E\to \mathcal{P}(\Z)$ as follows:
	\begin{itemize}
		\item $\sigma(e_1) = \{0, 1\}$. 
		\item For each $i$, let $m_i$ be the sum of all the elements of the $\sigma(e_j)$ with $j<i$. 
		\item Define $\sigma(e_i) = \{2m_i, 4m_i\}$. 
	\end{itemize}
	Let $p$ be any prime larger than $8m_{(2n)^2}$. 
	Let $\G$ be the modular $n$-Morse suited graph given by this data. 
	
	We will now prove that $\G$ is a sizeable graph of rank $n$. 
	We begin by showing that the subgraph spanned by $A_i^s\sqcup B_j^t$ is connected for all choices of $i, j, s, t$. 
	Fix $i, j, s, t$ and denote the subgraph spanned by $A_i^s\sqcup B_j^t$ by $\mathcal{S}$.
	Since the sets assigned to edges by the map $\sigma$ all have size 2, we see that every vertex in $\mathcal{S}$ has valence 2. 
	Thus $\mathcal{S}$ is a disjoint union of circles. 
	There is an action of $\Z/p\Z$ on $\G$ given by $m\cdot a_i^s(k) = a_i^s(k+m)$ and $ m\cdot b_j^t(l) = b_j^t(l+m)$. 
	This action preserves $\mathcal{S}$ and hence preserves its components. 
	Since this is a transitive action of $\Z/p\Z$ on $A_i^s$ we see that each component of $\mathcal{S}$ either contains one vertex of $A_i^s$ or contains all of $A_i^s$. 
	Since each vertex has valence two we see that any component of $\mathcal{S}$ contains at least two vertices and hence contains all of $A_i^s$. 
	Similarly, it must contain all of $B_j^t$ and is a cycle of length $2p$, hence connected. 
	
	We now move on to showing that there are no embedded cycles of length $4$. 
	Suppose for a contradiction that there is an embedded cycle of length 4. 
	Label the vertices of this cycle $a_i^s(k), b_j^t(l), a_{i'}^{s'}(k'), b_{j'}^{t'}(l')$. 
	We split into the following four cases: 
	\begin{enumerate}
		\item $i = i', s = s', j = j', t = t'$, 
		\item $i = i', s = s'$, 
		\item $j = j', t = t'$, 
		\item $A_i^s\neq A_{i'}^{s'}, B_j^t\neq B_{j'}^{t'}$. 
	\end{enumerate}
	
	The first case would give a cycle of length 4 in the subgraph spanned by $A_i^s, B_j^t$. 
	But by the above this subgraph is a loop of length $2p>4$, leading to a contradiction. 
	
	The second case corresponds to two edges in the graph $\mathcal{K}$. These are the edges between  $A_i^s, B_j^t$ and $A_i^s, B_{j'}^{t'}$; let these edges be $e_\alpha$, $e_\beta$ respectively. 
	Without loss of generality we can assume that $\beta > \alpha$. 
	By considering the possible orderings on the edges, the existence of a 4-cycle with these vertices would require $2m_\beta +2m_\alpha \equiv 0 \mod p$ or $2m_\beta - 2m_\alpha \equiv 0 \mod p$.
	However, by the definition of $m_\beta$ and $p$ we see that neither of these can hold. 
	Indeed,  $0<2m_\beta +2m_\alpha<4m_\beta < p$ and $0<2m_\beta - 2m_\alpha < 2m_\beta < p$. 
	The third case can be treated identically. 
	
	For the fourth case, we consider the 4 edges $e_\alpha, e_\beta, e_\gamma, e_\delta$ of $\mathcal{K}$ that are defined by the vertices $a_i^s(k), b_j^s(l), a_{i'}^{s'}(k'), b_{j'}^{t'}(l')$. 
	We can assume that $\alpha >\beta, \gamma, \delta$. 
	Let $r_\alpha, r_\beta, r_\gamma, r_\delta$ be the elements of $\sigma(e_\alpha),\sigma(e_\beta), \sigma(e_\gamma), \sigma(e_\delta)$ used in this loop.
	To have an embedded cycle of length 4 we would then have to have an equality of the form $r_\alpha \pm r_\beta\pm r_\gamma\pm r_\delta \equiv 0 \mod p$ (after possibly reversing orientations if needed). However, we have inequalities $0< r_\alpha - r_\beta- r_\gamma- r_\delta <r_\alpha + r_\beta+ r_\gamma+ r_\delta < 2r_\alpha < p$, which is a contradiction.
	Hence there are no embedded cycles of length 4. 
\end{construction}

\subsection{Proof of Theorem \ref{thm:Main} using cube complexes defined by sizeable graphs}

We now fix a sizeable graph $\Gamma$ of rank $n$ and define the corresponding cube complex $X_\G$. Let $A\ast B$ be the complete bipartite graph on $A$ and $B$. Consider the cube complex $\mathbb{X} = (A*B)\times (A*B)\times (A*B)$. There is a partition of the vertex set of $\mathbb{X}$ into 8 types, corresponding to whether each coordinate is in $A$ or $B$. By definition $X_{\G}$ will be the subcomplex of $\mathbb{X}$ induced by its vertex set. It thus suffices to define the vertex set of $X_{\G}$. A vertex $v = (v_1, v_2, v_3)$ is in $X_\G$ if it satisfies one of the following conditions: 
\begin{itemize}
	\item $v_i\in A$ for all $i$. 
	\item $v_i\in B$ for all $i$. 
	\item $v_1\in A, v_2\in B$ and $v_1$ and $v_2$ are connected by an edge in $\G$. 
	\item $v_2\in A, v_3\in B$ and $v_2$ and $v_3$ are connected by an edge in $\G$. 
	\item $v_3\in A, v_1\in B$ and $v_3$ and $v_1$ are connected by an edge in $\G$. 
\end{itemize}

This construction is the same as the one in \cite{Lod-18,Kro-21}. Thus, we immediately obtain hyperbolicity of this cube complex.

The next step is to define a homomorphism to $\Z^n$. We do this by defining $n$ continuous functions $f_i: X_\G \to S^1$ which induce linearly independent characters on fundamental groups. To define them, we first define maps $g_i\colon A\ast B\to S^1$, then extend affinely to cubes in $\mathbb{X}$ and finally restrict to $X_{\Gamma}$. To define the $g_i$, we parametrise each edge of $A*B$ by $[0,1]$ identifying $0$ with the end point in $A$ and $1$ with the end point in $B$. For each $i\in \{1, \dots, n\}$, the map $g_i\colon A*B\to S^1$ is then the map that sends the vertex set $A\sqcup B$ to the unique vertex of $S^1$ and extends to edges $e$ as follows: 
\begin{itemize}
	\item $g_i(t) = t$ if $e$ is an edge between a vertex in $A_i^+$ and a vertex in $B_j^+$ for some $j$. 
	\item $g_i(t) = t$ if $e$ is an edge between a vertex in $A_j^+$ and a vertex in $B_i^+$ for some $j$.
	\item $g_i(t) = t$ if $e$ is an edge between a vertex in $A_i^-$ and a vertex in $B_j^-$ for some $j$. 
	\item $g_i(t) = t$ if $e$ is an edge between a vertex in $A_j^-$ and a vertex in $B_i^-$ for some $j$. 
	\item $g_i(t) = 1-t$ if $e$ is an edge between a vertex $A_i^+$ and a vertex in $B_j^-$ for some $j$.
	\item $g_i(t) = 1-t$ if $e$ is an edge between a vertex $A_j^+$ and a vertex in $B_i^-$ for some $j$.
	\item $g_i(t) = 1-t$ if $e$ is an edge between a vertex $A_i^-$ and a vertex in $B_j^+$ for some $j$.
	\item $g_i(t) = 1-t$ if $e$ is an edge between a vertex $A_j^-$ and a vertex in $B_i^+$ for some $j$.
	\item $g_i(t) = 0$ otherwise. 
\end{itemize}

Extending the $g_i$ affinely to cubes we get a map $(A*B)\times (A*B)\times (A*B)\to S^1$ that restricts to $g_i$ on every factor. The map $f_i:X_{\G}\to S^1$ is then the restriction of this map to the subcomplex $X_{\G}\subset \mathbb{X}$. The induced homomorphism $\phi_i=f_{i\ast}\colon \pi_1(X_\G)\to \Z$ has image of index 2 if $n=1$ and is surjective if $n>1$. The subset $\{\phi_1, \dots, \phi_n\}\subset {\rm Hom}(\pi_1(X_\G), \R)$ is linearly independent; to see this, consider the restriction of the $f_j$ to a subgraph of the form $A_i\ast B_i$ contained in one of the factors, where $A_i = A_i^-\sqcup A_i^+, B_i = B_i^-\sqcup B_i^+$, and observe that this restriction is non-trivial on fundamental groups if and only if $i=j$. By linear independence of the $\phi_i$ the image of the homomorphism $\phi=\left(\phi_1,\dots,\phi_n\right)\colon \pi_1(X)\to \Z^n$ has finite index in $\Z^n$.

\begin{theorem}
	The kernel $\ker(\phi)$ is a finitely presented subgroup of $\pi_1(X_\G)$ which is not of type $F_3$. 
\end{theorem}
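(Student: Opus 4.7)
The plan is to prove finite presentability of $\ker(\phi)$ by applying the BNSR machinery of Theorems~\ref{thm:Sigma-BNSR} and~\ref{thm:Main-BNSR} to show $S(\pi_1(X_\G), \ker(\phi)) \subseteq \Sigma^2(\pi_1(X_\G))$, and to rule out type $F_3$ by an Euler characteristic argument. Since $\{\phi_1,\dots,\phi_n\}$ is linearly independent and $\phi(\pi_1(X_\G))$ has finite index in $\Z^n$, every class in $S(\pi_1(X_\G), \ker(\phi))$ is represented by a character of the form $\psi = \sum_{i=1}^n \lambda_i \phi_i$ for some $(\lambda_1,\dots,\lambda_n) \in \R^n \setminus \{0\}$, so it suffices to show every such $\psi$ lies in $\Sigma^2(\pi_1(X_\G))$.

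First I would construct the $\psi$-equivariant affine height map $\widetilde{h}\colon \widetilde{X_\G} \to \R$ by assigning signed weights to the edges of $A\ast B$ according to the sizeable decomposition: an edge between $A_a^s$ and $B_b^t$ gets weight $(\lambda_a+\lambda_b)\cdot st$ when $a \neq b$ and $\lambda_a\cdot st$ when $a=b$ (with $+\mapsto +1,\ -\mapsto -1$), since these are exactly the contributions coming from $\sum_i \lambda_i g_i$. Extending these weights affinely to cubes of $\mathbb{X}$, restricting to $X_\G$ and lifting yields $\widetilde{h}$. I would then apply Theorem~\ref{thm:Main-BNSR} with $n=1$: to conclude $[\psi]\in \Sigma^2(\pi_1(X_\G))$ it suffices to verify at every vertex $v\in X_\G$ and every (possibly empty) dead simplex $\sigma\in L_{v,X_\G}^{\dagger}$ that $Lk_{L_{v,X_\G}^{\ast,\uparrow/\downarrow}}(\sigma)$ is $(-\dim\sigma)$-acyclic, and that the full $L_{v,X_\G}^{\ast,\uparrow/\downarrow}$ is $1$-connected.

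A case analysis on the eight $A/B$-types of vertices of $X_\G$ shows that the link of a pure-type vertex (AAA or BBB) is the three-fold join of discrete subsets of the form $N_\G(v_j)$, while the link of a mixed-type vertex is the join of a single discrete subset $N_\G(v_j)$ with a bipartite copy of $\G$ spanning the other two coordinate directions. The ascending and descending living links inherit analogous decompositions, where each ``discrete'' factor is replaced by its ascending/descending subset and the ``$\G$-factor'' by the induced subgraph of $\G$ on a union of classes $A_i^s$ and $B_j^t$ determined by $(\lambda_i)$. Two features of sizeable graphs then suffice. First, for any $(\lambda_i)\neq 0$ and any vertex $v$, each directional piece contains at least one vertex of strictly positive weight and at least one of strictly negative weight, since the weight formula always admits sign choices as soon as some $\lambda_i\neq 0$, and every $A_i^s$-vertex has a $\G$-neighbor in every $B_j^t$ (and vice versa) by the sizeable connectivity hypothesis. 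Second, for any nonempty choice of classes, the induced subgraph of $\G$ on $\bigsqcup A_i^s \cup \bigsqcup B_j^t$ is connected, because each pair $A_i^s\sqcup B_j^t$ is already connected by hypothesis and different pairs share common classes. Combined with standard connectivity statements for joins (a join of a nonempty discrete set with a connected graph, or of three nonempty discrete sets, is $1$-connected), these facts give the required $1$-connectivity of the full ascending/descending living links and the required acyclicity/nonemptiness of the links of dead $0$- and $1$-simplices in every case. The main obstacle is the uniform bookkeeping across the eight vertex types and the sign patterns of $(\lambda_i)$, but the uniformity of the sizeable conditions makes the verification essentially formal.

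For the non-$F_3$ claim, $X_\G$ is a finite nonpositively-curved cube complex, hence aspherical, so $\pi_1(X_\G)$ is of type $F$ with cohomological dimension at most $3$. If $\ker(\phi)$ were of type $F_3$, it would be of type $FP_3$ and, since it has cohomological dimension at most $3$, also of type $FP$. Multiplicativity of Euler characteristics in the short exact sequence $1\to \ker(\phi)\to \pi_1(X_\G)\to \phi(\pi_1(X_\G))\to 1$, together with $\chi(\phi(\pi_1(X_\G)))=\chi(\Z^n)=0$, would force $\chi(X_\G)=0$. I would conclude by computing $\chi(X_\G)$ directly from its cube structure in terms of the combinatorics of $\G$ and verifying that the $3$-cube term dominates, so that $\chi(X_\G)$ is (negative and hence) nonzero for sizeable graphs of rank $n$, analogously to Kropholler's computation in~\cite{Kro-21}, yielding the desired contradiction.
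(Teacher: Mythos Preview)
Your proposal is correct and follows essentially the same route as the paper's proof: both establish $S(\pi_1(X_\G),\ker(\phi))\subseteq\Sigma^2(\pi_1(X_\G))$ via Theorem~\ref{thm:Main-BNSR} by a case analysis on pure versus mixed vertex types, using that links are either threefold joins of discrete sets or $\G\ast N_\G(x_j)$, and then invoke the sizeable connectivity hypothesis together with an Euler-characteristic obstruction (citing \cite{Kro-21}) to rule out type $F_3$. Your explicit edge-weight formula $(\lambda_a+\lambda_b)st$ (resp.\ $\lambda_a st$) is a clean repackaging of the paper's $g_i$-computations; the one shortcut you do not mention, which the paper exploits, is that ascending living links of dead simplices are invariants of the corresponding cube, so for mixed-type vertices most dead-simplex cases reduce to those already treated at pure-type vertices.
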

\begin{proof}
	By Theorem \ref{thm:Sigma-BNSR} it suffices to show that $S(\pi_1(X_\G),\ker(\phi))\subseteq \Sigma^2(\pi_1(X_\G))$ to deduce that $\ker(\phi)$ is finitely presented. We will use Theorem \ref{thm:Main-BNSR} to prove this. First observe that every element of $S(\pi_1(X_\G),\ker(\phi))$ has a representative of the form $\sum_{i=1}^{n}\lambda_i \phi_i:\pi_1(X_\G)\to \mathbb{R}$ with $\left(\lambda_1,\dots,\lambda_{n}\right)\in \mathbb{R}^{n}\setminus \left\{0\right\}$ and, conversely, any such homomorphism defines an element of $S(\pi_1(X_\G),\ker(\phi))$. 
	
	Let $\left[\psi\right]=\left[\sum_{i=1}^{n}\lambda_i \phi_i\right]\in S(\pi_1(X_\G),\ker(\phi))$. We may assume that $\lambda_1> 0$; the other cases being analogous. We define a $\psi$-equivariant height map $h\colon \widetilde{X_\G}\to \mathbb{R}$ by $h = \sum_{i=1}^{n}\lambda_i h_i$, where the $h_i\colon \widetilde{X_\G}\to \mathbb{R}$ are lifts of the $f_i$ to $\widetilde{X_\G}$
	
	To apply Theorem \ref{thm:Main-BNSR} we need to determine the connectivity properties of all ascending and descending living links of all dead simplices in $L_{v,\widetilde{X_\G}}$ for every vertex $v=(v_1,v_2,v_3)$ of $\widetilde{X_\G}$. We will distinguish two cases:
	\begin{itemize}[leftmargin=1.5cm]
		\item[{\bf Case 1}] either all $v_i$ are in $A$, or all $v_i$ are in $B$;
		\item[{\bf Case 2}] there are $v_i$'s in both $A$ and $B$. 
	\end{itemize}
	\vspace{.2cm}
	
{\noindent \bf Case 1.} We assume that $v_i\in A$ for all $i$. The case where $v_i\in B$ for all $i$ can be treated similarly. 
	
	We start by determining the link $L_{v,\widetilde{X_\G}}$ of $v$. The vertices adjacent to $v$ in $\mathbb{X}$ are obtained by changing one coordinate $v_i$ to $w_i$ for some $w_i\in B$. Such a vertex is in the complex $X_\G$ if and only if $w_i$ is connected to $v_{i-1}$ in $\G$ where indices are taken modulo 3. By computing what happens when we change two of the vertices we see that the link of $v$ is a join of three discrete sets corresponding to the vertices adjacent to $v_1, v_2$ and $v_3$ in $\G$. 
	
	We check that the connectivity properties of all ascending and descending links of dead simplices in $L_{v,\widetilde{X}_{\G}}$ satisfy the conditions of Theorem \ref{thm:Main-BNSR}, beginning with the case that $\sigma$ is the empty (dead) simplex, where we need to show that they are simply connected. In this case, the ascending (resp. descending) links are the full subcomplexes of $L_{v,\widetilde{X_\G}}$ spanned by vertices corresponding to edges  from $v$ to $w$ with $h(w)>h(v)$ (resp. $h(w)<h(v)$). It suffices to compute the ascending living link for a given height function as these will be the descending links for $-h$. 
	
	Since we are working in a join, simple connectivity of the ascending living link of $\sigma$ is equivalent to showing that there is a vertex corresponding to an edge where $h$ is ascending in each of the three sets in the join. Indeed, the full subcomplex spanned by these three non-empty sets will then be simply connected. 
	We begin with adjacent vertices of the form $(v_1, w_2, v_3)$. 
	They are in the ascending living link exactly when $\sum_{i=1}^n\lambda_i(h_i(v_2)-h_i(w_2)) < 0$. 
	Suppose that $v_2$ is in $A_j^+$; the case that $v_2\in A_j^-$ is the same with signs switched. 
	We divide into three cases, depending whether $\lambda_j > 0$, $\lambda_j< 0$, or $\lambda_j = 0$. 
	
	First assume $\lambda_j>0$. Then there is an adjacent vertex $(v_1, w_2, v_3)$ where $w_2$ is in $B_j^+$. For such a vertex we have $h_j(v_2) - h_j(w_2) = -1$ and $h_i(v_2) - h_i(w_2) = 0$ if $i\neq j$. 
	We deduce that $h(v) - h(w) = -\lambda_j < 0$. 

	If $\lambda_j < 0$ pick $w_2$ in $B_j^-$ and argue similarly.
	
	In the case $\lambda_j = 0$, we take an adjacent vertex of the form $(v_1, w_2, v_3)$, where $w_2\in B_1^+$. 
	Then $h_1(v_2) - h_1(w_2) = -1 = h_j(v_2) - h_j(w_2)$ and $h_i(v_2) = h_i(w_2)$ for all other $i$. 
	Hence we see that $h(v) - h(w) = \sum_{i=1}^n\lambda_i(h_i(v_2)-h_i(w_2)) = -\lambda_1 <0.$ 
	
	We conclude that there is a vertex in each of the three sets that make up the join so the ascending living link is simply connected. 
	Similarly, the descending living link is simply connected. 
	
	This also shows that the ascending and descending living links of dead vertices are connected. 
	To see this note that the link of each vertex in $L_{v, X_\G}$ is a join of two discrete sets. 
	The preceding paragraphs show that there is an ascending living vertex in each of these sets. 
	Thus the ascending living link of a vertex in $L_{v, X_\G}$ is a join of two non-empty sets and therefore connected. 
	Similarly, the ascending living link of an edge in $L_{v, X_\G}$ is non-empty. \vspace{.3cm}
	
	{\noindent \bf Case 2.}	We now consider the case of a vertex of the form $x = (x_1, x_2, x_3)$ where $x_1\in A, x_2\in B, x_3\in A$. The other cases can be treated similarly. 
	
	Since $x$ is a vertex of $X_\G$ we see that there is an edge from $x_1$ to $x_2$ in $\G$. 
	We first study the link of such a vertex. 
	Vertices of the form $y = (x_1, x_2, y_3)$ with $y_3\in B$ are in the complex $X_\G$ since there is an edge in $\G$ from $x_1$ to $x_2$. 
	Similarly vertices of the form $y = (x_1, y_2, x_3)$ with $y_2\in A$ are in the complex $X_\G$ since $x_1, y_2, x_3\in A$. 
	Finally vertices of the form $y = (y_1, x_2, x_3)$ with $y_1\in B$ are in the complex exactly when there is an edge from $y_1$ to $x_3$ in $\G$. 
	Thus the vertex set of $L_{x, X_\G}$ can naturally be identified with $A\sqcup B\sqcup Lk(x_3, \G)$.
	
	There is an edge in the link between vertices corresponding to $(x_1, x_2, y_3)$ and $(x_1, y_2, x_3)$ exactly when the vertex $(x_1, y_2, y_3)$ is in $X_\G$. 
	The latter is the case precisely when there is an edge from $y_2$ to $y_3$ in $\G$. 
	Moreover, there is always an edge between vertices corresponding to $(x_1, y_2, x_3)$ and $(y_1, x_2, x_3)$. Indeed, if $(y_1, x_2, x_3)$ is in the complex, then there is an edge between $x_3$ and $y_1$ in $\G$ which implies that  $(y_1, y_2, x_3)$ is also in the complex. 
	Similarly, there is an edge between all vertices corresponding to $(x_1, x_2, y_3)$ and $(y_1, x_2, x_3)$, because $y_1, x_2, y_3\in B$ implies that $(y_1, x_2, y_3)$ is in the complex. 
	Thus $L_{x, X_\G}$ is a join $\G\ast Lk(x_3, \G)$. 
	
	We now compute the ascending and descending living link of $x$. 
	We will only provide the computation for the ascending living link, since the computation for the descending living link is analogous.
	Since the link is a join, the ascending living link splits as $\G_1\ast V$, where $\G_1$ is a subgraph of $\G$ and $V$ is a subset of $Lk(x_3, \G)$. 
	To show that it is simply connected, it suffices to show that $\G_1$ is connected and that $V$ is non-empty. 
	The analysis from Case 1 shows that $V$ is non-empty, so we are left with computing $\G_1$. 
	Let $\mathcal{V}$ be the vertex set of $\G_1$. 
	Since $\G_1$ is the full subgraph of $\G$ spanned by $\mathcal{V}$, it suffices to determine $\mathcal{V}$. 
	
	We begin by checking which vertices of the form $y = (x_1, x_2, y_3)$ are in $\mathcal{V}$, by computing for which $y_3\in B$ we have $h(x) - h(y) > 0$. 
	Suppose that $x_3\in A_j^+$. The case $x_3\in A_j^-$ is similar with signs switched. 
	As above, we split into the cases $\lambda_j>0, \lambda_j<0$, or $\lambda_j = 0$. 
	
	First assume $\lambda_j>0$. In this case there is an adjacent vertex $(x_1, x_2, y_3)$, where $y_3$ is in $B_j^+$. For such a vertex we have $h_j(x_3) - h_j(y_3) = -1$ and $h_i(x_3) - h_i(y_3) = 0$ if $i\neq j$. We deduce that $h(x) - h(y) = -\lambda_j < 0$. 
	
	If $\lambda_j < 0$ pick $y_3$ in $B_j^-$.
	
	In the case $\lambda_j = 0$, we take an adjacent vertex of the form $(x_1, x_2, y_3)$ where $y_3\in B_1^+$. 
	Thus $h_1(x_3) - h_1(y_3) = -1 = h_j(x_3) - h_j(y_3)$ and $h_i(x_3) = h_i(y_3)$ for all other $i$. 
	We deduce that $h(x) - h(y) = \sum_{i=1}^n\lambda_i(h_i(x_3)-h_i(y_3)) = -\lambda_1 <0.$ 
	
	Hence $\mathcal{V}$ contains at least one of the sets $B_j^+, B_j^-, $ or $B_1^+$. 
	Since the functions $h_i$ only depend on the set $B_k^s$ containing a vertex and not on the vertex itself, we deduce that $\mathcal{V}\cap B$ is a non-empty disjoint union of sets of the form $B_k^s$. 
	
	Repeating this analysis with vertices of the form $y = (x_1, y_2, x_3)$, we obtain that $\mathcal{V}\cap A$ is a non-empty disjoint union of sets of the form $A_l^t$. 
	
	By assumption, for each choice $k, l\in \{1, \dots, n\}$ and $s, t\in \{-, +\}$ the subgraph spanned by $A_l^t\cup B_k^s$ is connected. 
	Thus, the subgraph spanned by $\mathcal{V}$ can be decomposed as a union of connected graphs, each having non-empty intersection with one of the previous graphs. 
	In particular, $\G_1$ is connected, completing the proof that the ascending living link of $x$ is connected.
	
	Finally, we study the ascending living links of dead simplices in $L_{x, X_\G}$. 
	Since these are invariants of the corresponding cube, we have already dealt with the case of dead edges in $L_{x, X_\G}$. Indeed, every square contains a vertex of the form $(v_1, v_2, v_3)$, where $v_i\in A$ for all $i$ or $v_i\in B$ for all $i$ and we have already treated edges ending in such a vertex in Case 1. The same reasoning applies to dead vertices of the form $(x_1,y_2,x_3)$.
	Thus we are left with treating dead vertices in the link corresponding to edges from $x$ to vertices of the form $(y_1, x_2, x_3)$ or $(x_1, x_2, y_3)$. 
	
	We begin with a vertex of the form $y = (x_1, x_2, y_3)$. 
	Let $\sigma$ be the corresponding vertex of $L_{x, X_\G}$. 
	Since $L_{x, X_\G}$ is a join of the form $\G\ast Lk(x_3,\Gamma)$, $\sigma$ is a vertex of $\G$ corresponding to an element of $B$. 
	Thus the link of $\sigma$ in $L_{x, X_\G}$ is of the form $W\ast V$ where $W=Lk(\sigma,\Gamma)$ and $V=Lk(x_3,\Gamma)$.  
	Since all the subgraphs spanned by $A_k^s\cup B_l^t$ are connected we see that $W\cap A_k^s$ and $V\cap B_k^s$ are non-empty for all choices of $k, s$. 
	Using the same arguments as before, we deduce that the ascending living link of $\sigma$ is also a join of two non-empty sets and thus connected. 
	
	For a vertex $\tau$ corresponding to $(y_1, x_2, x_3)$ we have that the link of $\tau$ in $L_{x, X_\G}$ is exactly $\G$. 
	By considering again adjacent vertices $z$ of $x$ coming from $\G$ which satisfy $h(z)>h(x)$, we see that the ascending living link of $\tau$ is a subgraph of $\G$ spanned by sets of the form $A_k^s$ and $B_l^t$. 
	The same analysis as before shows that the vertex set of this graph must contain at least one set of the form $A_k^s$ and one set of the form $B_l^t$. 
	Thus, we see that this subgraph can be decomposed as a union of connected graphs, each of which has non-empty intersection with one of the previous graphs. 
	Thus, it is connected, which completes the proof. 
	
	To see that the kernel is not of type $F_3$, we use the Euler characteristic in the same way as before. The calculations in \cite[Proposition 4.3]{Kro-21} show that in this case the Euler characteristic of $X_\G$ is $2p^3(1-36n^2 + 192n^4 -256n^6) + 48p^2(n^2 - 4n^4)$, which is always negative for $n\geq 1$. 
\end{proof}

\bibliography{References}
\bibliographystyle{amsplain}

\end{document}